\theoremstyle{theorem}
\newtheorem{thm}{Theorem}[section]
\newtheorem{cor}[thm]{Corollary}
\newtheorem{prop}[thm]{Proposition}
\newtheorem{lem}[thm]{Lemma}
\newtheorem{conj}[thm]{Conjecture}
\theoremstyle{definition}
\newtheorem{defn}[thm]{Definition}
\newtheorem{exmp}[thm]{Example}
\newtheorem{rem}[thm]{Remark}
\title{\vspace{-2.5cm}Monomial ideals of weighted oriented graphs}
\author{\small Yuriko Pitones\thanks{\texttt{ypitones@math.cinvestav.edu.mx}}\ }
\author{\small Enrique Reyes\thanks{\texttt{ereyes@math.cinvestav.mx}}\ }
\author{\small Jonathan Toledo\thanks{\texttt{jtt@math.cinvestav.mx}}}
\affil{\vspace{-0.2cm}\scriptsize Departamento de Matem\'{a}ticas\\ Centro de Investigaci\'{o}n y de Estudios Avanzados del Instituto Polit\'ecnico Nacional\\ Apartado Postal 14--740, Ciudad de M\'{e}xico\\ 07000 M\'exico}
\date{\tiny\ }
\begin{document}

\maketitle
\thispagestyle{empty}

\vspace{-10ex}

\parindent=8mm

\begin{abstract}
\noindent
Let $I=I(D)$ be the edge ideal of a weighted oriented graph $D$. We determine the irredundant irreducible decomposition of $I$. Also, we characterize the associated primes and the unmixed property of $I$. Furthermore, we give a combinatorial characterization for the unmixed property of $I$, when $D$ is bipartite, $D$ is a whisker or $D$ is a cycle. Finally, we study the Cohen-Macaulay property of $I$.
\end{abstract}

\noindent
{\small\textbf{Keywords:} Weighted oriented graphs, unmixed property, irreducible decomposition, Cohen-Macaulay property.}

\section{Introduction}

A {\it weighted oriented graph\/} is a triplet $D=(V(D),E(D),w)$, where $V(D)$ is a finite set, $E(D) \subseteq V(D) \times V(D)$ and $w$ is a function $w:V(D) \to\mathbb{N}$. The vertex set of $D$ and the edge set of $D$ are $V(D)$ and $E(D)$, respectively. Some times for short we denote these sets by $V$ and $E$ respectively. The \emph{weight} of $x\in V$ is $w(x)$. If $e=(x,y)\in E$, then $x$ is the tail of $e$ and $y$ is the head of $e$. The {\it underlying graph\/} of $D$ is the simple graph $G$ whose vertex set is $V$ and whose edge set is $\{\{x,y\}| (x,y)\in E\}$. If $V(D)=\{x_1,\ldots,x_n\}$, then we consider the polynomial ring $R=K[x_1,\ldots,x_n]$ in $n$ variables over a field $K$. In this paper we introduce and study the edge ideal of $D$ given by $I(D)=(x_{i}x_{j}^{w(x_{j})}:(x_{i},x_{j})\in E(D))$ in $R$, (see Definition \ref{I(D)}). 

 \vspace{1ex}

In Section 2 we study the vertex covers of $D$. In particular we introduce the notion of strong vertex cover (Definition \ref{strong-cover-defn}) and we prove that a minimal vertex cover is strong. In Section 3 we characterize the irredundant irreducible decomposition of $I(D)$. In particular we show that the minimal monomial irreducible ideals of $I(D)$ are associated with the strong vertex covers of $D$. In Section 4 we give the following characterization of the unmixed property of $I(D)$. 

 \vspace{1ex}

\tikzstyle{block} = [draw, rectangle, text width=10em, text centered, minimum height=10mm, node distance=5em]
\begin{center}
\noindent
\begin{tikzpicture}

\draw[implies-implies,double equal sign distance] (1.3,0) -- (2.7,0);
\draw[implies-implies,double equal sign distance] (0,-.5) -- (0,-1.5);
\draw[implies-implies,double equal sign distance] (4,-.5) -- (4,-1.5);
\draw[implies-implies,double equal sign distance] (8.5,-.5) -- (8.5,-1.5);

\node (D-U) at (0,0) {$I(D)$ is unmixed};
\node (G-U) at (7,0) {$G$ is unmixed and $D$ has the minimal-strong property};
\node[block, yshift=-5em](S-M) at (0,-.7) {All strong vertex covers have the same cardinality};
\node[block, yshift=-5em](M-C) at (4.3,-.7) {All minimal vertex covers have the same cardinality};
\node[block, yshift=-5em](S-C) at (8.6,-.7) {All strong vertex covers are minimals};

\end{tikzpicture}
\end{center}

 \vspace{1ex}
 
Furthermore, if $D$ is bipartite, $D$ is a whisker or $D$ is a cycle, we give an effective (combinatorial) characterization of the unmixed property. Finally in Section 5 we study the Cohen-Macaulayness  of $I(D)$. In particular we characterize the Cohen-Macaulayness when $D$ is a path or $D$ is complete. Also, we give an example where this property depend of the characteristic of the field $K$. 

\section{Weighted oriented graphs and their vertex covers}

In this section we define the weighted oriented graphs and we study their vertex covers. Furthermore, we define the strong vertex covers and we characterize when $V(D)$ is a strong vertex cover of $D$. In this paper we denote the set $\{x\in V\mid w(x)\neq 1\}$ by $V^{+}$. 

\begin{defn}
A {\it vertex cover\/} $C$ of $D$ is a subset of $V$, such that if $(x,y)\in E$, then $x\in C$ or $y\in C$. A vertex cover $C$ of $D$ is {\it minimal\/} if each proper subset of $C$ is not a vertex cover of $D$.
\end{defn}

\begin{defn}
Let $x$ be a vertex of a weighted oriented graph $D$, the sets $N_{D}^{+}(x)=\{y:(x,y)\in E(D)\}$ and $N_{D}^{-}(x)=\{y:(y,x)\in E(D)\}$ are called the {\it out-neighbourhood\/} and the {\it in-neighbourhood\/} of $x$, respectively. Furthermore, the {\it neighbourhood\/} of $x$ is the set $N_{D}(x)=N_{D}^{+}(x)\cup N_{D}^{-}(x)$. 
\end{defn}

\begin{defn}\label{L-sets}
Let $C$ be a vertex cover of a weighted oriented graph $D$, we define
\[
L_1(C)=\{x\in C\mid N_{D}^{+}(x)\cap C^{c}\neq \emptyset \},
\]
\[
L_2(C)=\{x\in C\mid\mbox{$x\notin L_1(C)$ and $N^{-}_{D}(x)\cap C^c\neq\emptyset$}\} \text{ and } 
\]
\[
L_3(C)=C\setminus(L_1(C)\cup L_2(C)),
\]
where $C^{c}$ is the complement of $C$, i.e. $C^{c}=V\setminus C$.
\end{defn}

\begin{prop}\label{remark-L3}
If $C$ is a vertex cover of $D$, then 
\begin{center}
$L_{3}(C)=\{x\in C \mid N_{D}(x)\subset C\}$.
\end{center}
\end{prop}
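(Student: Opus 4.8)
The plan is to prove the set equality
\[
L_3(C)=\{x\in C\mid N_D(x)\subset C\}
\]
by unwinding the definition of $L_3(C)=C\setminus(L_1(C)\cup L_2(C))$ and showing that the complement (within $C$) of the right-hand side is exactly $L_1(C)\cup L_2(C)$. First I would fix $x\in C$ and analyze the condition $N_D(x)\subset C$. Since $N_D(x)=N_D^+(x)\cup N_D^-(x)$, the failure of this containment means $N_D(x)\cap C^c\neq\emptyset$, which splits into the two cases $N_D^+(x)\cap C^c\neq\emptyset$ or $N_D^-(x)\cap C^c\neq\emptyset$. These are precisely the defining conditions appearing in $L_1(C)$ and $L_2(C)$, so the strategy is to match them up.

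The cleanest route is a double inclusion. For the inclusion $L_3(C)\subseteq\{x\in C\mid N_D(x)\subset C\}$, I would take $x\in L_3(C)$, so $x\in C$ but $x\notin L_1(C)$ and $x\notin L_2(C)$. From $x\notin L_1(C)$ we get $N_D^+(x)\cap C^c=\emptyset$, i.e.\ $N_D^+(x)\subset C$. Then, since $x\notin L_1(C)$ already holds, the only way to avoid $L_2(C)$ is to have $N_D^-(x)\cap C^c=\emptyset$, i.e.\ $N_D^-(x)\subset C$. Combining gives $N_D(x)=N_D^+(x)\cup N_D^-(x)\subset C$, as desired. For the reverse inclusion, I would take $x\in C$ with $N_D(x)\subset C$; then both $N_D^+(x)\cap C^c=\emptyset$ and $N_D^-(x)\cap C^c=\emptyset$, so $x\notin L_1(C)$ and $x\notin L_2(C)$, whence $x\in L_3(C)$.

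The one subtlety worth stating carefully is the role of the clause ``$x\notin L_1(C)$'' inside the definition of $L_2(C)$. In the reverse inclusion it is harmless, but in the forward inclusion I must be sure that $x\notin L_2(C)$ genuinely forces $N_D^-(x)\subset C$ rather than being vacuously satisfied because $x$ happened to lie in $L_1(C)$. This is why I first establish $x\notin L_1(C)$ and only then deduce that the in-neighbourhood condition of $L_2(C)$ must itself fail. I expect this bookkeeping with the overlapping definitions of $L_1$ and $L_2$ to be the only real point requiring attention; everything else is a direct translation between ``$N_D^{\pm}(x)\cap C^c=\emptyset$'' and ``$N_D^{\pm}(x)\subset C$.'' No deeper structural properties of vertex covers are needed, so the argument is purely set-theoretic and short.
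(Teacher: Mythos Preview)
Your proposal is correct and follows essentially the same double-inclusion argument as the paper, with the same use of the definitions of $L_1(C)$ and $L_2(C)$. The only cosmetic difference is that the paper briefly notes $x\notin N_D(x)$ to record that the inclusion $N_D(x)\subset C$ is strict, whereas you leave this implicit; since $x\in C$ and the underlying graph is simple, this is automatic and does not affect the argument.
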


\proof
If $x\in L_{3}(C)$, then $N_{D}^{+}(x)\subseteq C$, since $x\notin L_{1}(C)$. Furthermore $N^{-}_{D}(x)\subseteq C$, since $x\notin L_{2}(C)$. Hence $N_{D}(x)\subset C$, since $x\notin N_{D}(x)$. Now, if $x\in C$ and $N_{D}(x)\subset C$, then $x\notin L_{1}(C)\cup L_{2}(C)$. Therefore $x\in L_{3}(C)$. 
\qed

\begin{prop}\label{minimalcover-L3}
If $C$ is a vertex cover of $D$, then $L_{3}(C)=\emptyset$ if and only if $C$ is a minimal vertex cover of $D$.
\end{prop}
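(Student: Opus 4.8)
The plan is to pass through the reformulation of $L_3$ provided by Proposition \ref{remark-L3} and then reduce the whole statement to a single clean observation about deleting one vertex from a cover. By Proposition \ref{remark-L3} we have $L_3(C)=\{x\in C\mid N_D(x)\subseteq C\}$, so $L_3(C)=\emptyset$ says precisely that no vertex of $C$ has all of its neighbours inside $C$; equivalently, every $x\in C$ satisfies $N_D(x)\cap C^c\neq\emptyset$. The goal is therefore to show that this last condition is equivalent to minimality.

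The key observation I would isolate first is the following: for a vertex $x\in C$, the set $C\setminus\{x\}$ is again a vertex cover of $D$ if and only if $N_D(x)\subseteq C$ (that is, if and only if $x\in L_3(C)$). For the first direction, assume $N_D(x)\subseteq C$ and take any edge of $D$; writing its endpoints in the underlying graph as $a,b$, if $x\notin\{a,b\}$ then the endpoint lying in $C$ still lies in $C\setminus\{x\}$, while if $x$ is an endpoint then its partner lies in $N_D(x)\subseteq C$ and is distinct from $x$, hence again in $C\setminus\{x\}$; either way the edge is covered. Conversely, if $N_D(x)\not\subseteq C$, pick $y\in N_D(x)\cap C^c$; the edge joining $x$ and $y$ has neither endpoint in $C\setminus\{x\}$, so $C\setminus\{x\}$ fails to be a cover. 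Here I use that vertex covers of $D$ coincide with vertex covers of its underlying graph, since covering $(x,y)\in E$ only asks $x\in C$ or $y\in C$ and is insensitive to the orientation.

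With this observation in hand both implications are immediate, once I record the standard reduction of minimality to single-vertex deletions: a cover $C$ is minimal if and only if $C\setminus\{x\}$ is not a cover for every $x\in C$. Indeed, if some proper subset $C'\subsetneq C$ were a cover, then choosing $x\in C\setminus C'$ gives $C'\subseteq C\setminus\{x\}$, and any superset of a cover is a cover, so $C\setminus\{x\}$ would be a cover; the converse is trivial since $C\setminus\{x\}$ is itself a proper subset. Combining: if $C$ is minimal then $C\setminus\{x\}$ is never a cover, so by the observation $N_D(x)\not\subseteq C$ for every $x\in C$, whence $L_3(C)=\emptyset$. If instead $L_3(C)=\emptyset$, then $N_D(x)\not\subseteq C$ for every $x\in C$, so $C\setminus\{x\}$ is never a cover, and the reduction yields minimality.

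The argument is essentially combinatorial and short; the only point requiring care—the step I would treat as the main (minor) obstacle—is the equivalence between the paper's definition of minimality (no proper subset is a cover) and the one-vertex-at-a-time criterion, together with the remark that a superset of a cover is a cover, so that breaking the cover by deleting each single vertex really does force every proper subset to fail.
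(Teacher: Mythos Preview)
Your proof is correct and follows essentially the same approach as the paper: both arguments pass through Proposition~\ref{remark-L3} and hinge on the observation that $C\setminus\{x\}$ remains a vertex cover if and only if $N_D(x)\subseteq C$. Your version is simply more explicit, spelling out the reduction of minimality to single-vertex deletions and the two directions of the key observation, whereas the paper leaves these as immediate.
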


\proof
 $\Rightarrow)$ If $x\in C$, then by Proposition \ref{remark-L3} we have $N_{D}(x)\not\subset C$, since $L_{3}(C)=\emptyset$. Thus, there is $y\in N_{D}(x)\setminus C$ implying $C\setminus \{x\}$ is not a vertex cover. Therefore, $C$ is a minimal vertex cover. 
  
\vspace{1ex}
\noindent
$\Leftarrow)$ If $x\in L_{3}(C)$, then by Proposition \ref{remark-L3}, $N_{D}(x)\subseteq C\setminus \{x\}$. Hence, $C\setminus\{x\}$ is a vertex cover. A contradiction, since $C$ is minimal. Therefore $L_{3}(C)=\emptyset$. 

\begin{defn}\label{strong-cover-defn}
A vertex cover $C$ of $D$ is {\it strong\/} if for each $x\in L_3(C)$ there is $(y,x)\in E(D)$ such that $y\in L_2(C)\cup L_{3}(C)$ with $y\in V^{+}$ (i.e. $w(y)\neq 1$).
\end{defn}

\begin{rem}\label{strong-cover}
Let $C$ be a vertex cover of $D$. Hence, by Proposition \ref{remark-L3} and since $C=L_{1}(C)\cup L_{2}(C)\cup L_{3}(C)$, we have that $C$ is strong if and only if for each $x\in C$ such that $N(x)\subset C$, there exist $y\in N^{-}(x)\cap(C\setminus L_{1}(C))$ with $y\in V^{+}$.
\end{rem}

\begin{cor}\label{C-minimal-strong}
If $C$ is a minimal vertex cover of $D$, then $C$ is strong.
\end{cor}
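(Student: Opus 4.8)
The plan is to reduce the statement to the emptiness of $L_3(C)$ and then observe that the condition defining a strong vertex cover is vacuous in that case. First I would invoke Proposition \ref{minimalcover-L3}, which supplies exactly the equivalence needed: since $C$ is a minimal vertex cover of $D$, that proposition gives $L_3(C)=\emptyset$. This is the single substantive input, and it has already been established, so I would simply cite it rather than re-derive the characterization $L_3(C)=\{x\in C\mid N_D(x)\subset C\}$.

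Next I would return to Definition \ref{strong-cover-defn} and read off its logical form. The requirement that $C$ be strong is a statement universally quantified over the elements $x\in L_3(C)$: for each such $x$ one must exhibit an edge $(y,x)\in E(D)$ with $y\in \bigl(L_2(C)\cup L_3(C)\bigr)\cap V^{+}$. When $L_3(C)=\emptyset$ there are no vertices $x$ against which to test this clause, so the condition is satisfied vacuously. Hence $C$ is strong, which is what we want.

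The only point requiring care — and it is a mild one rather than a real obstacle — is to confirm that Definition \ref{strong-cover-defn} imposes no additional constraint outside the universal quantifier over $L_3(C)$, i.e.\ that there is no separate nonvacuous hypothesis hidden in the statement. Once one checks that the entire strong condition is borne by the elements of $L_3(C)$, the corollary follows immediately from Proposition \ref{minimalcover-L3}, with no further computation needed.
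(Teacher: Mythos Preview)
Your proof is correct and follows exactly the same approach as the paper: invoke Proposition~\ref{minimalcover-L3} to get $L_3(C)=\emptyset$, and then observe that the strong condition is vacuously satisfied. The paper's own proof is in fact terser than yours, but the content is identical.
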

\proof
By Proposition \ref{minimalcover-L3}, we have $L_{3}(C)=\emptyset$, since $C$. Hence, $C$ is strong.
\qed

\begin{rem}\label{V-strong}
The vertex set $V$ of $D$ is a vertex cover. Also, if $z\in V$, then $N_{D}(z)\subseteq V\setminus z$. Hence, by Proposition \ref{remark-L3}, $L_{3}(V)=V$. Consequently, $L_{1}(V)=L_{2}(V)=\emptyset$. By Proposition \ref{minimalcover-L3}, $V$ is not a minimal vertex cover of $D$. Furthermore since $L_{3}(V)=V$, $V$ is a strong vertex cover if and only if $N_{D}^{-}(x)\cap V^{+}\neq \emptyset$ for each $x\in V$.
\end{rem}	

\begin{defn}
If $G$ is a cycle with $E(D) = \{(x_1,x_2), \ldots, (x_{n-1},x_n), (x_n,x_1)\}$ and $V(D)=\{x_1,\ldots, x_n\}$, then $D$ is called {\it oriented cycle\/}. 
\end{defn}

\begin{defn}
$D$ is called {\it unicycle oriented graph\/} if it satisfies the following conditions:
\begin{enumerate}[1)]
\item The underlying graph of $D$ is connected and it has exactly one cycle $C$.
\item $C$ is an oriented cycle in $D$. Furthermore for each $y\in V(D)\setminus V(C)$, there is an oriented path from $C$ to $y$ in $D$.
\item $w(x)\neq 1$ if $deg_{G}(x)\geq 1$. 
\end{enumerate}
\end{defn}

\begin{lem}\label{lemma2-V} 
If $V(D)$ is a strong vertex cover of $D$ and $D_{1}$ is a maximal unicycle oriented subgraph of $D$, then $V(D^\prime)$ is a strong vertex cover of $D^\prime=D\setminus V(D_{1})$.
\end{lem}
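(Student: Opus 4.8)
The plan is to strip the statement down to its essential content and then bring in the maximality of $D_{1}$. Since the full vertex set of any graph is automatically a vertex cover, the only thing to prove is that $V(D')$ is \emph{strong} as a cover of $D'$. For this I would invoke Remark \ref{V-strong} applied to the graph $D'$: because $L_{3}(V(D'))=V(D')$ and $L_{1}(V(D'))=L_{2}(V(D'))=\emptyset$, the cover $V(D')$ is strong if and only if $N^{-}_{D'}(x)\cap V(D')^{+}\neq\emptyset$ for every $x\in V(D')$. Thus the entire lemma reduces to the statement that each vertex of $D'$ has, \emph{inside} $D'$, an in-neighbour of weight $\neq 1$.

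Next I would fix $x\in V(D')$ and feed the hypothesis on $D$ into the same remark: since $V(D)$ is a strong vertex cover of $D$, Remark \ref{V-strong} gives some $y\in N^{-}_{D}(x)$ with $w(y)\neq 1$. If $y\notin V(D_{1})$, then $y\in V(D')$ and the edge $(y,x)$ has both endpoints outside $V(D_{1})$, hence belongs to $D'=D\setminus V(D_{1})$; so $y\in N^{-}_{D'}(x)\cap V(D')^{+}$ and we are finished. All the difficulty is therefore concentrated in the remaining case, where every weight-$\neq 1$ in-neighbour of $x$ in $D$ happens to lie in $V(D_{1})$.

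In that case I would aim to contradict the maximality of $D_{1}$ by enlarging it. Choosing $y\in V(D_{1})$ with $(y,x)\in E(D)$ and $w(y)\neq 1$, I would form $D_{1}'$ from $D_{1}$ by adjoining the vertex $x$ together with the single edge $(y,x)$, and then check the three defining conditions of a unicycle oriented graph. Connectivity of the underlying graph is clear since $x$ is attached to $y\in V(D_{1})$; and because $x$ is appended as a pendant it introduces no new cycle, so $D_{1}'$ still has exactly one cycle, namely the oriented cycle $C$ of $D_{1}$, which establishes condition (1) and the ``oriented cycle'' half of condition (2). For the reachability half of (2) I would take the oriented path from $C$ to $y$ supplied by $D_{1}$ (trivial when $y\in V(C)$) and prolong it by the edge $(y,x)$ to reach $x$.

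The main obstacle is condition (3), that every non-isolated vertex of $D_{1}'$ has weight $\neq 1$. For the vertices coming from $D_{1}$ this is inherited, but $x$ is non-isolated in $D_{1}'$, so what must be secured is precisely $w(x)\neq 1$; this is the delicate heart of the argument, and it is where the strong-cover hypothesis on $V(D)$ must be combined with the structure of the edges leaving $D_{1}$ to exclude a weight-$1$ vertex that is fed only from within $D_{1}$. Once $w(x)\neq 1$ is in hand, $D_{1}'$ is a unicycle oriented subgraph strictly containing $D_{1}$, contradicting maximality; hence the bad case cannot occur, $y$ must already lie in $V(D')$, and the in-neighbourhood criterion of Remark \ref{V-strong} is verified for every $x$. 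I expect the pendant extension and the directed-path bookkeeping to be routine, so the weight condition on the newly attached vertex is the step I anticipate will require the most care.
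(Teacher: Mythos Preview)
Your plan matches the paper's proof almost step for step: reduce via Remark~\ref{V-strong} to finding, for each $x\in V(D')$, an in-neighbour of weight $\neq 1$ inside $D'$; take the $y$ supplied by the hypothesis on $D$; and if $y\in V(D_{1})$, adjoin the pendant edge $(y,x)$ to $D_{1}$ and contradict maximality.

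The obstacle you single out, however, is not real. Condition~(3) in the definition of a unicycle oriented graph is printed as ``$\deg_{G}(x)\geq 1$'' but is intended---and used everywhere else in the paper---as ``$\deg_{G}(x)\geq 2$''. This is explicit in the proof of Proposition~\ref{prop-unicycle}, where $w(z_{r-1})\neq 1$ is inferred precisely from $\deg_{D_{j}}(z_{r-1})\geq 2$, and in the paper's proof of the present lemma the clause ``$\deg_{D_{2}}(x)=1$'' is invoked exactly to exempt $x$ from condition~(3). With the correct threshold there is nothing to verify about $w(x)$, and the ``delicate heart'' you anticipate simply disappears. The only weight that genuinely needs attention is $w(y)$: the vertex $y$ may have had degree~$1$ in $D_{1}$ (so condition~(3) said nothing about it there) but acquires degree $\geq 2$ in the enlarged graph, and $w(y)\neq 1$ you already have by choice. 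Once the degree threshold is read correctly, your argument is complete and coincides with the paper's.
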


\proof
We take $x\in V(D^\prime)$. Thus, by Remark \ref{V-strong},  there is $y\in N_{D}^{-}(x)\cap V^{+}(D)$. If $y\in D_{1}$, then we take $D_{2}=D_{1}\cup \{(y,x)\}$. Hence, if $C$ is the oriented cycle of $D_{1}$, then $C$ is the unique cycle of $D_{2}$, since $deg_{D_{2}}(x)=1$. If $y\in C$, then $(y,x)$ is an oriented path from $C$ to $x$. Now, if $y\notin C$, then there is an oriented path $\mathcal{L}$ form $C$ to $y$ in $D_1$. Consequently, $\mathcal{L } \cup \{(y,x)\}$ is an oriented path form $C$ to $x$. Furthermore, $deg_{D_{2}}(x)=1$ and $w(y)\neq 1$, then $D_{2}$ is an unicycle oriented graph. A contradiction, since $D_{1}$ is maximal. This implies $y\in V(D^\prime)$, so $y\in N^{-}_{D^\prime}(x)\cap V^{+}(D^{\prime})$. Therefore, by Remark \ref{V-strong}, $V(D^\prime)$ is a strong vertex cover of $D^\prime$. 
\qed

\begin{lem}\label{lemma1-V}
If $V(D)$ is a strong vertex cover of $D$, then there is an unicycle oriented subgraph of $D$. 
\end{lem}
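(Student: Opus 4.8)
The plan is to construct the required unicycle oriented subgraph explicitly, as a single oriented cycle obtained by repeatedly passing to an in-neighbour of weight different from $1$. The starting point is Remark \ref{V-strong}: since $V(D)$ is a strong vertex cover of $D$, for every vertex $x\in V$ we have $N_{D}^{-}(x)\cap V^{+}\neq\emptyset$, that is, each vertex admits at least one in-neighbour in $V^{+}$.

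First I would build a backward chain. Pick any vertex $x_0\in V$. Using the observation above, choose $x_1\in N_{D}^{-}(x_0)\cap V^{+}$, so that $(x_1,x_0)\in E(D)$ and $w(x_1)\neq 1$; then choose $x_2\in N_{D}^{-}(x_1)\cap V^{+}$, and continue. This produces a sequence $x_0,x_1,x_2,\ldots$ with $(x_{i+1},x_i)\in E(D)$ for all $i\geq 0$ and $x_i\in V^{+}$ for all $i\geq 1$. Since $V(D)$ is finite, some vertex must repeat; let $k$ be minimal with $x_k=x_j$ for some $j<k$. The edges $(x_{j+1},x_j),(x_{j+2},x_{j+1}),\ldots,(x_k,x_{k-1})$, together with the identification $x_j=x_k$, then trace out an oriented cycle $C$ on the distinct vertices $x_j,x_{j+1},\ldots,x_{k-1}$.

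Next I would verify that every vertex of $C$ lies in $V^{+}$. Every index occurring on $C$ is at least $1$, with the single possible exception $j=0$; but if $j=0$ then $x_0=x_k$ with $k\geq 1$, forcing $x_0\in V^{+}$ as well. Hence all vertices of $C$ have weight $\neq 1$. Taking $D_1=C$ with inherited weights and orientation, I would then check the three conditions in the definition of unicycle oriented graph: the underlying graph of a cycle is connected and has exactly one cycle; $C$ is an oriented cycle by construction and $V(D_1)\setminus V(C)=\emptyset$, so the oriented-path condition is vacuous; and since every vertex of $C$ has degree $2$ in the underlying graph and weight $\neq 1$, the third condition holds. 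Thus $D_1$ is a unicycle oriented subgraph of $D$.

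The pigeonhole extraction of the cycle is routine; the only delicate point — and the only place where the weight hypothesis is genuinely used — is ensuring that no vertex of the extracted cycle has weight $1$. This is exactly why I isolate the edge case $j=0$ and observe that the repeated vertex $x_0=x_k$ is automatically pushed into $V^{+}$. Everything else is a direct match against the definition of unicycle oriented graph, so I expect no further obstacle.
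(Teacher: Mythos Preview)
Your proof is correct and follows the same strategy as the paper: build a backward chain of in-neighbours in $V^{+}$ via Remark~\ref{V-strong}, extract an oriented cycle by finiteness, and handle the one delicate case where the repeated vertex is the starting vertex to ensure it lies in $V^{+}$. The only difference is that the paper retains the tail from the cycle down to the starting vertex and sets $D_{1}=C\cup\mathcal{L}$, whereas you discard the tail and take $D_{1}=C$; this is a harmless simplification, since a bare oriented cycle with all vertices in $V^{+}$ already satisfies the definition of a unicycle oriented subgraph.
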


\proof
Let $y_{1}$ be a vertex of $D$. Since $V=V(D)$ is a strong vertex cover, there is $y_2\in V$ such that $y_2\in N^{-}(y_1)\cap V^{+}$. Similarly, there is $y_3\in N^{-}(y_2)\cap V^{+}$. Consequently, $(y_3,y_2,y_1)$ is an oriented path. Continuing this process, we can assume there exist $y_2,y_3,\ldots, y_{k}\in V^{+}$ where $(y_k,y_{k-1},\ldots,y_2,y_1)$ is an oriented path and there is $1\leq j\leq k-2$ such that $(y_{j},y_k)\in E(D)$, since $V$ is finite. Hence, $C=(y_k,y_{k-1},\ldots, y_j,y_k)$ is an oriented cycle and $\mathcal{L}=(y_j,\ldots,y_1)$ is an oriented path form $C$ to $y_1$. Furthermore, if $j=1$, then $w(y_1)\neq 1$. Therefore, $D_1=C\cup \mathcal{L}$ is an unicycle oriented subgraph of $D$. 
\qed

\begin{prop}\label{prop-unicycle}
Let  $D=(V,E,w)$ be a weighted oriented graph, hence $V$ is a strong vertex cover of $D$ if and only if there are $D_{1},\ldots, D_s$ unicycle oriented subgraphs of $D$ such that $V(D_1),\ldots, V(D_s)$ is a partition of $V=V(D)$.
\end{prop}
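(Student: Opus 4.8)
The plan is to prove the two implications separately, leaning on the structural lemmas already established. For the forward direction I would argue by iteration. Assuming $V$ is a strong vertex cover, Lemma \ref{lemma1-V} guarantees that $D$ contains at least one unicycle oriented subgraph, and since $D$ is finite I may choose one, say $D_1$, that is maximal with respect to inclusion. Then Lemma \ref{lemma2-V} applies and tells me that $V(D')$ is again a strong vertex cover of $D'=D\setminus V(D_1)$. Because $D'$ is itself a weighted oriented graph whose entire vertex set is a strong vertex cover, I can repeat the argument on $D'$: pick a maximal unicycle oriented subgraph $D_2$ of $D'$, pass to $D'\setminus V(D_2)$, and continue. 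Each unicycle oriented subgraph contains a cycle, so each step removes a nonempty set of vertices, and after finitely many steps the residual vertex set is empty. The subgraphs $D_1,\ldots,D_s$ produced this way are unicycle oriented subgraphs of $D$ (a subgraph of a subgraph of $D$ is a subgraph of $D$), their vertex sets are pairwise disjoint by construction, and their union is all of $V$; hence $V(D_1),\ldots,V(D_s)$ is the required partition.

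For the converse I would invoke the characterization of strong covers for $V$ given in Remark \ref{V-strong}, namely that $V$ is strong if and only if $N_D^{-}(x)\cap V^{+}\neq\emptyset$ for every $x\in V$. So I fix $x\in V$ and, using that the $V(D_i)$ partition $V$, I locate the unique index $i$ with $x\in V(D_i)$. Writing $C$ for the oriented cycle of $D_i$, there are two cases. If $x$ lies on $C$, then since $C$ is an oriented cycle $x$ has a predecessor $y$ on $C$ with $(y,x)\in E(D_i)\subseteq E(D)$. If $x\notin V(C)$, condition (2) in the definition of unicycle oriented graph provides an oriented path from $C$ to $x$, and the vertex $y$ immediately preceding $x$ on this path again satisfies $(y,x)\in E(D)$. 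In either case $y$ has degree at least one in the underlying graph of $D_i$, so condition (3) forces $w(y)\neq 1$, i.e. $y\in V^{+}$; thus $y\in N_D^{-}(x)\cap V^{+}$. Since $x$ was arbitrary, Remark \ref{V-strong} yields that $V$ is a strong vertex cover.

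The substance of the argument is carried entirely by Lemmas \ref{lemma1-V} and \ref{lemma2-V} together with Remark \ref{V-strong}; the verifications of disjointness and exhaustiveness in the iteration, and the appeal to condition (3) for the weight, are routine. The only point demanding real care is in the forward direction, where I must insist on choosing a \emph{maximal} unicycle oriented subgraph at \emph{every} stage—maximal inside the current residual graph rather than inside the original $D$—so that Lemma \ref{lemma2-V} remains applicable after each deletion, and confirm that the recursion is legitimate precisely because the full vertex set of each residual graph is again a strong vertex cover. I expect this bookkeeping to be the main, albeit mild, obstacle.
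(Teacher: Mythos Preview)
Your proposal is correct and follows essentially the same route as the paper: the forward direction iterates Lemmas~\ref{lemma1-V} and~\ref{lemma2-V} exactly as you describe (the paper also selects a \emph{maximal} unicycle subgraph at each stage), and the converse is the same case analysis on whether $x$ lies on the oriented cycle of its $D_i$. The only cosmetic difference is that for the predecessor $y$ the paper invokes $\deg_{D_i}(y)\geq 2$ rather than $\geq 1$ before appealing to condition~(3); your weaker observation already suffices given the definition as stated, and in fact $y$ does have degree at least two in every case, so either justification works.
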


\proof
$\Rightarrow)$ By Lemma \ref{lemma1-V}, there is a maximal unicycle oriented subgraph $D_1$ of $D$. Hence, by Lemma \ref{lemma2-V}, $V(D^\prime)$ is a strong vertex cover of $D^\prime=D\setminus V(D_1)$. So, by Lemma \ref{lemma1-V}, there is $D_2$ a maximal unicycle oriented subgraph of $D^\prime $. Continuing this process we obtain unicycle oriented subgraphs $D_1,\ldots, D_s$ such that $V(D_1),\ldots, V(D_s)$ is a partition of $V(D)$.   
  
\vspace{1ex}
\noindent
$\Leftarrow)$ We take $x\in V(D)$. By hypothesis there is $1\leq j\leq s$ such that $x\in V(D_j)$. We assume $C$ is the oriented cycle of $D_j$. If $x\in V(C)$, then there is $y\in V(C)$ such that $(y,x)\in E(D_{j})$ and $w(y)\neq 1$, since $deg_{D_j}(y)\geq 2$ and $D_{j}$ is a unicycle oriented subgraph. Now, we assume $x\notin V(C)$, then there is an oriented path $\mathcal{L}=(z_1,\ldots, z_r)$ such that $z_1\in V(C)$ and $z_r=x$. Thus, $(z_{r-1},x)\in E(D)$. Furthermore, $w(z_{r-1})\neq 1$, since $deg_{D_{j}}(z_{r-1})\geq 2$. Therefore $V$ is a strong vertex cover.
\qed

\section{Edge ideals and their primary decomposition}

As is usual if $I$ is a monomial ideal of a polynomial ring $R$, we denote by $\mathcal{G}(I)$ the minimal monomial set of generators of $I$.  Furthermore, there exists a unique decomposition, $I=\mathfrak{q}_{1}\cap \cdots \cap \mathfrak{q}_{r}$, where $\mathfrak{q}_{1},\ldots, \mathfrak{q}_{r}$ are irreducible monomial ideals such that $I\neq \bigcap_{i\neq j}^{}\mathfrak{q}_{i}$ for each $j=1,\ldots, r$. This is called the irredundant irreducible decomposition of $I$. Furthermore, $\mathfrak{q}_{i}$ is an irreducible monomial ideal if and only if $\mathfrak{q}_{i}=(x_{i_{1}}^{a_{1}},\ldots , x_{i_{s}}^{a_{s}})$ for some variables $x_{i_{j}}$. Irreducible ideals are primary, then a irreducible decomposition is a primary decomposition. For more details of primary decomposition of monomial ideals see \cite[Chapter 6]{monalg-2}.  In this section, we define the edge ideal $I(D)$ of a weighted oriented graph $D$ and we characterize its irredundant irreducible decomposition. In particular we prove that this decomposition is an irreducible primary decomposition, i.e, the radicals of the elements of the irredundant irreducible decomposition of $I(D)$ are different. 

\begin{defn}\label{I(D)}
Let $D=(V,E,w)$ be a weighted oriented graph with $V=\{x_{1},\ldots, x_{n}\}$. The {\it edge ideal of $D$\/}, denote by $I(D)$, is the ideal of $R=K[x_{1},\ldots, x_{n}]$ generated by $\{x_{i}x_{j}^{w(x_{j})} \mid (x_{i},x_{j})\in E\}$.
\end{defn}

\begin{defn}
A {\it source\/} of $D$ is a vertex $x$, such that $N_{D}(x)=N_{D}^{+}(x)$. A {\it sink\/} of $D$ is a vertex $y$ such that $N_{D}(y)=N_{D}^{-}(y)$. 
\end{defn}

\begin{rem}\label{rem3.3}
Let $D=(V,E,w)$ be a weighted oriented graph. We take $D^\prime=(V,E,w^\prime)$ a weighted oriented graph such that $w^\prime(x)=w(x)$ if $x$ is not a source and $w^\prime(x)=1$ if $x$ is a source. Hence, $I(D)=I(D^\prime)$. For this reason in this paper, we will always assume that if $x$ is a source, then $w(x_{i})=1$.

\end{rem}

\begin{defn}
Let $C$ be a vertex cover of $D$,
 the {\it irreducible ideal associated to $C$\/} is the ideal
\[
I_C=\bigl( L_1(C)\cup\{x_j^{w(x_{j})}\mid x_j\in L_2(C)\cup L_3(C)\}\bigr).
\]
\end{defn}

\begin{lem}\label{rem01}
$I(D)\subseteq I_C$ for each vertex cover $C$ of $D$.
\end{lem}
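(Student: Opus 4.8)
The plan is to verify the containment on generators. Since $I(D)$ is a monomial ideal generated by $\{x_ix_j^{w(x_j)} : (x_i,x_j)\in E\}$ and $I_C$ is also monomial, it suffices to show that every generator $m = x_ix_j^{w(x_j)}$ lies in $I_C$; for monomial ideals this amounts to exhibiting a single generator of $I_C$ that divides $m$. Recall that the generators of $I_C$ are the variables $x_k$ with $x_k\in L_1(C)$, together with the powers $x_k^{w(x_k)}$ with $x_k\in L_2(C)\cup L_3(C)$.

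Fix an edge $(x_i,x_j)\in E$. The key decision is to case-split on the \emph{head} $x_j$ rather than on the tail, because the monomial $x_ix_j^{w(x_j)}$ carries the full weight $w(x_j)$ on $x_j$ but only degree one on $x_i$. First I would treat the case $x_j\in C$. Then $x_j$ lies in exactly one of $L_1(C)$, $L_2(C)$, $L_3(C)$. If $x_j\in L_1(C)$, the variable $x_j$ is a generator of $I_C$ and divides $m$ since $w(x_j)\geq 1$. If instead $x_j\in L_2(C)\cup L_3(C)$, then $x_j^{w(x_j)}$ is a generator of $I_C$ and visibly divides $m$. Either way $m\in I_C$.

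The remaining case is $x_j\notin C$. Here I would invoke the vertex cover hypothesis: since $(x_i,x_j)\in E$ and $x_j\notin C$, necessarily $x_i\in C$. Moreover $x_j\in N_D^{+}(x_i)\cap C^{c}$, so $N_D^{+}(x_i)\cap C^{c}\neq\emptyset$, which by Definition \ref{L-sets} places $x_i\in L_1(C)$. Hence the variable $x_i$ is a generator of $I_C$ and divides $m$, giving $m\in I_C$.

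There is no real obstacle here; the only point requiring care is organizing the analysis around the head $x_j$, so that the degree-one generators (coming from $L_1$) and the weighted generators (coming from $L_2\cup L_3$) are matched to the correct divisor of $m$. Once the subcase $x_j\notin C$ is seen to force the tail into $L_1(C)$, every generator of $I(D)$ is accounted for, and the containment $I(D)\subseteq I_C$ follows.
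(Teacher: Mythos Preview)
Your proof is correct and follows essentially the same approach as the paper's: both arguments case-split on whether the head $x_j$ (the paper's $y$) lies in $C$, handle the case $x_j\in C$ by noting that either $x_j$ or $x_j^{w(x_j)}$ is a generator of $I_C$, and handle the case $x_j\notin C$ by observing that then $x_i\in L_1(C)$. Your write-up is slightly more explicit in breaking out the $L_1$ versus $L_2\cup L_3$ subcases, but there is no substantive difference.
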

\proof
We take $I=I(D)$ and $m\in \mathcal{G}(I)$, then $m=xy^{w(y)}$, where $(x,y)\in D$. Since $C$ is a vertex cover, $x\in C$ or $y\in C$. If $y\in C$, then $y \in I_{C}$ or $y^{w(y)}\in I_{C}$. Thus, $m=xy^{w(y)}\in I_{C}$. Now, we assume $y\notin C$, then $x\in C$. Hence, $y\in N_{D}^{+}(x)\cap C^{c}$, so $x\in L_{1}(C)$. Consequently, $x\in I_{C}$ implying $m=xy^{w(y)}\in I_{C}$. Therefore $I\subseteq I_{C}$.  
\qed

\begin{defn}
Let $I$ be a monomial ideal. An irreducible monomial ideal $\mathfrak{q}$ that contains $I$ is called a {\it minimal irreducible monomial ideal of $I$\/} if for any irreducible monomial ideal $\mathfrak{p}$ such that $I\subseteq \mathfrak{p}\subseteq \mathfrak{q}$ one has that $\mathfrak{p}=\mathfrak{q}$.
\end{defn}

\begin{lem}\label{lemma1}
Let $D$ be a weighted oriented graph. If $I(D)\subseteq (x_{i_{1}}^{a_{1}},\ldots , x_{i_{s}}^{a_{s}})$, then $\{x_{i_{1}},\ldots, x_{i_{s}}\}$ is a vertex cover of $D$.
\end{lem}
\proof
We take $J=(x_{i_{1}}^{a_{1}},\ldots, x_{i_{s}}^{a_{s}})$. If $(a,b)\in E(D)$, then $ab^{w(b)}\in I(D)\subseteq J$. Thus, $x_{i_{j}}^{a_{j}}|ab^{w(b)}$ for some $1\leq j\leq s$. Hence, $x_{i_{j}}\in \{a,b\}$ and $\{a,b\}\cap\{x_{i_{1}}\ldots x_{i_{s}}\}\neq \emptyset$. Therefore $\{x_{i_{1}},\ldots, x_{i_{s}}\}$ is a vertex cover of $D$.
\qed

\begin{lem}\label{lemma2}
Let $J$ be a minimal irreducible monomial ideal of $I(D)$ where $\mathcal{G}(J)=\{x_{i_{1}}^{a_{s}}, \ldots, x_{i_{s}}^{a_{s}}\}$. If $a_{j}\neq 1$ for some $1\leq j \leq s$, then there is $(x,x_{i_{j}})\in E(D)$ where $x\notin \mathcal{G}(J)$.
\end{lem}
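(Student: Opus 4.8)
The plan is to argue by contradiction using the minimality of $J$, exploiting the observation that raising the $j$-th exponent produces a strictly smaller irreducible ideal, which therefore cannot contain $I(D)$. Concretely, I would set $J^{+}=(x_{i_1}^{a_1},\ldots,x_{i_j}^{a_j+1},\ldots,x_{i_s}^{a_s})$. Since the minimal generators of an irreducible monomial ideal are pure powers of distinct variables, no generator of $J^{+}$ divides $x_{i_j}^{a_j}$, so $x_{i_j}^{a_j}\notin J^{+}$ and hence $J^{+}\subsetneq J$; moreover $J^{+}$ is again irreducible. By the definition of a minimal irreducible monomial ideal, $J^{+}$ cannot contain $I(D)$, so some minimal generator $m=x_ax_b^{w(b)}$ of $I(D)$, arising from an edge $(a,b)\in E(D)$, lies in $J\setminus J^{+}$.

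Next I would pin down which generator of $J$ divides $m$. Because $J$ and $J^{+}$ share every generator except the $j$-th, the condition $m\in J\setminus J^{+}$ forces $x_{i_j}^{a_j}\mid m$ while $x_{i_k}^{a_k}\nmid m$ for all $k\neq j$ (otherwise that common generator would already place $m$ in $J^{+}$). In particular $x_{i_j}$ divides $m$, so $x_{i_j}\in\{x_a,x_b\}$, and here the two-case split is the crux. If $x_{i_j}=x_a$ were the tail, then $\deg_{x_{i_j}}(m)=1$ (the underlying graph is simple, so $a\neq b$), forcing $a_j=1$ and contradicting the hypothesis $a_j\neq 1$; hence $x_{i_j}=x_b$ is the head and $(x_a,x_{i_j})\in E(D)$ is the desired edge.

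The remaining, and only delicate, point is to verify that the tail $x:=x_a$ satisfies $x\notin\mathcal{G}(J)$, which requires reading the statement correctly: $\mathcal{G}(J)$ is the set of minimal pure-power generators, so $x\in\mathcal{G}(J)$ would mean $x=x_{i_k}$ for some $k$ with $a_k=1$. I expect this to be the main obstacle, and I would dispose of it as follows: if $x_a=x_{i_k}$ with $a_k=1$, then $x_{i_k}^{a_k}=x_a$ divides $m$, and since $x_a\neq x_b=x_{i_j}$ we have $k\neq j$, contradicting the fact just established that no generator other than the $j$-th divides $m$. Therefore $x\notin\mathcal{G}(J)$, as required. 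As a byproduct the same analysis yields $a_j=w(x_{i_j})$, the expected value of the exponent; note that Lemma \ref{lemma1} is not actually needed for this argument, since the contradiction comes entirely from the minimality of $J$.
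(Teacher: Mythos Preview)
Your proof is correct. Both your argument and the paper's exploit the minimality of $J$ by exhibiting a strictly smaller irreducible ideal, but the specific constructions differ. The paper assumes by contradiction that every in-neighbor of $x_{i_j}$ already lies in $\mathcal{G}(J)$ and then \emph{removes} the generator $x_{i_j}^{a_j}$ entirely, showing that the resulting ideal $J'=(\mathcal{G}(J)\setminus\{x_{i_j}^{a_j}\})$ still contains $I(D)$, contradicting minimality. You instead \emph{increment} the $j$-th exponent to form $J^{+}$, use minimality to produce a witness $m\in I(D)\cap(J\setminus J^{+})$, and read off the desired edge directly from $m$. Your route avoids assuming the negation of the conclusion and, as you note, the extra information $x_{i_j}^{a_j+1}\nmid m$ together with $\deg_{x_{i_j}}(m)=w(x_{i_j})$ immediately forces $a_j=w(x_{i_j})$, which the paper establishes separately as Lemma~\ref{lemma3}. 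The paper's deletion trick, on the other hand, makes more explicit why the offending generator is redundant under the contradiction hypothesis.
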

\proof
By contradiction suppose there is $a_{j}\neq 1$ such that if $(x,x_{i_{j}})\in E(D)$, then $x\in M=\{x_{i_{1}}^{a_{1}},\ldots,x_{i_{s}}^{a_{s}}\}$. We take the ideal $J^\prime=(M\setminus \{x_{i_{j}}^{a_{j}}\})$. If $(a,b)\in E(D)$, then $ab^{w(b)}\in I(D)\subseteq J$. Consequently, $x_{i_{k}}^{a_{k}}|ab^{w(b)}$ for some $1\leq k\leq s$. If $k\neq j$, then $ab^{w(b)}\in J^\prime$. Now, if $k=j$, then by hypothesis $a_{j}\neq 1$. Hence, $x_{i_{j}}^{a_{j}}|b^{w(b)}$ implying $x_{i_{j}}=b$. Thus, $(a,x_{i_{j}})\in E(D)$, so by hypothesis $a\in M\setminus \{x_{i_{j}}^{a_{j}}\}$. This implies $ab^{w(b)}\in J^\prime$. Therefore $I(D)\subseteq J^{\prime}\subsetneq J$. A contradiction, since $J$ is minimal. 

\qed

\begin{lem}\label{lemma3}
Let $J$ be a minimal irreducible monomial ideal of $I(D)$ where $\mathcal{G}(J)=\{x_{i_{1}}^{a_{1}},\ldots, x_{i_{s}}^{a_{s}}\}$. If $a_{j}\neq 1$ for some $1\leq j\leq s$, then $a_{j}=w(x_{i_{j}})$.
\end{lem}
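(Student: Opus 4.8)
The plan is to prove the two inequalities $a_j\le w(x_{i_j})$ and $a_j\ge w(x_{i_j})$ separately: the upper bound comes from an in-edge at $x_{i_j}$, and the lower bound from the minimality of $J$. Throughout I use that $a_j\neq 1$ means $a_j\ge 2$, that the variables $x_{i_1},\ldots,x_{i_s}$ are pairwise distinct (since $\mathcal{G}(J)$ is the minimal generating set), and that the underlying graph is simple, so every edge $(a,b)$ has $a\neq b$.

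First I would establish $a_j\le w(x_{i_j})$. Since $a_j\neq 1$, Lemma \ref{lemma2} yields an edge $(x,x_{i_j})\in E(D)$ with $x\notin\{x_{i_1},\ldots,x_{i_s}\}$. Its generator $x\,x_{i_j}^{w(x_{i_j})}$ of $I(D)$ lies in $J$, so some $x_{i_k}^{a_k}$ divides it. The only variables dividing $x\,x_{i_j}^{w(x_{i_j})}$ are $x$ and $x_{i_j}$, and $x_{i_k}\neq x$; hence $x_{i_k}=x_{i_j}$, i.e. $k=j$, and comparing the exponent of $x_{i_j}$ gives $a_j\le w(x_{i_j})$.

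For the reverse inequality I would argue by contradiction: assume $a_j<w(x_{i_j})$ and produce an irreducible ideal strictly between $I(D)$ and $J$. Put $J'=(\mathcal{G}(J)\setminus\{x_{i_j}^{a_j}\})\cup(x_{i_j}^{w(x_{i_j})})$, i.e. $J'$ is $J$ with the exponent of $x_{i_j}$ raised from $a_j$ to $w(x_{i_j})$. Because $a_j\le w(x_{i_j})$ we have $J'\subseteq J$, and because $a_j<w(x_{i_j})$ together with the distinctness of the generator variables, $x_{i_j}^{a_j}\notin J'$, so $J'\subsetneq J$. It then remains to check $I(D)\subseteq J'$, which contradicts the minimality of $J$ and forces $a_j=w(x_{i_j})$.

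The main work—and the step I expect to be the crux—is verifying $I(D)\subseteq J'$. Given a generator $m=ab^{w(b)}$ of $I(D)$, since $I(D)\subseteq J$ some $x_{i_k}^{a_k}$ divides $m$. If $k\neq j$, then $x_{i_k}^{a_k}\in\mathcal{G}(J')$ and $m\in J'$ at once. The delicate case is $k=j$: here $x_{i_j}^{a_j}\mid ab^{w(b)}$ with $a_j\ge 2$, and since $a$ occurs to the first power in $m$ (as $a\neq b$), the exponent bound forces $x_{i_j}=b$. Then $m=a\,x_{i_j}^{w(x_{i_j})}$ is divisible by $x_{i_j}^{w(x_{i_j})}\in\mathcal{G}(J')$, so $m\in J'$ as well. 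This combinatorial observation—that $a_j\ge 2$ rules out $x_{i_j}$ being the (weight-one) tail and pins it to the head $b$—is precisely where the hypothesis $a_j\neq 1$ is indispensable.
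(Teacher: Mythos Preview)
Your proof is correct and follows essentially the same route as the paper's: invoke Lemma~\ref{lemma2} to obtain $a_j\le w(x_{i_j})$, then replace $x_{i_j}^{a_j}$ by $x_{i_j}^{w(x_{i_j})}$ and contradict minimality by showing $I(D)\subseteq J'\subsetneq J$ via the same case split on $k$. One small imprecision: Lemma~\ref{lemma2} only gives $x\notin\mathcal{G}(J)$, not $x\notin\{x_{i_1},\ldots,x_{i_s}\}$; but since $x$ occurs to the first power in $x\,x_{i_j}^{w(x_{i_j})}$, a divisor $x_{i_k}^{a_k}$ with $x_{i_k}=x$ would force $a_k=1$ and hence $x\in\mathcal{G}(J)$, so your conclusion $x_{i_k}=x_{i_j}$ stands.
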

\proof
By Lemma \ref{lemma2}, there is $(x,x_{i_{j}})\in E(D)$ with $x\notin M=\{x_{i_{1}}^{a_{1}},\ldots, x_{i_{s}}^{a_{s}}\}$. Also, $xx_{i_{j}}^{w(x_{i_{j}})}\in I(D)\subseteq J$, so $x_{i_{k}}^{a_{k}}|xx_{i_{j}}^{w(x_{i_{j}})}$ for some $1\leq k\leq s$. Hence, $x_{i_{k}}^{a_{k}}|x_{i_{j}}^{w(x_{i_{j}})}$, since $x\notin M$. This implies, $k=j$ and $a_{j}\leq w(x_{i_{j}})$. If $a_{j}<w(x_{i_{j}})$, then we take $J^{\prime}=(M^{\prime})$ where $M^\prime=\{M\setminus\{x_{i_{j}}^{a_{j}}\}\}\cup \{x_{i_{j}}^{w(x_{i_{j}})}\}$. So, $J^{\prime}\subsetneq J$. Furthermore, if $(a,b)\in E(D)$, then $m=ab^{w(b)}\in I(D)\subseteq J$. Thus, $x_{i_{k}}^{a_{k}}| ab^{w(b)}$ for some $1\leq k\leq s$. If $k\neq j$, then $x_{i_{k}}^{a_{k}}\in M^{\prime}$ implying $ab^{w(b)}\in J^{\prime}$. Now, if $k=j$ then $x_{i_{j}}^{a_{j}}|b^{w(b)}$, since $a_{j}>1$. Consequently, $x_{i_{j}}=b$ and $x_{i_{j}}^{w(x_{i_{j}})}|m$. Then $m\in J^\prime$. Hence $I(D)\subseteq J^\prime\subsetneq J$, a contradiction since $J$ is minimal. Therefore $a_{j}=w(x_{i_{j}})$. 
\qed

\begin{thm}\label{thm-irredDecom}
The following conditions are equivalent:
\begin{enumerate}[1)]
\item $J$ is a minimal irreducible monomial ideal of $I(D)$.
\item There is a strong vertex cover $C$ of $D$ such that $J=I_C$. 
\end{enumerate}
\end{thm}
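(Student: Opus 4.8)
The plan is to prove the two implications separately, exploiting throughout that an irreducible monomial ideal is generated by pure powers of distinct variables, hence is determined by its variable set together with the exponent attached to each variable. Comparing $J$ with $I_C$ thus reduces to matching variables and exponents, and in both directions the vertices of $L_3(C)$ are the delicate case.

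For $1)\Rightarrow 2)$ I would start from $\mathcal{G}(J)=\{x_{i_1}^{a_1},\ldots,x_{i_s}^{a_s}\}$ and set $C=\{x_{i_1},\ldots,x_{i_s}\}$, which is a vertex cover by Lemma \ref{lemma1}; by Lemma \ref{lemma3} every exponent satisfies $a_j\in\{1,w(x_{i_j})\}$, so only the value at each vertex must be decided. If $x_{i_j}\in L_1(C)$, choosing $y\in N_D^+(x_{i_j})\cap C^c$ the generator $x_{i_j}y^{w(y)}\in I(D)\subseteq J$ is divisible by some $x_{i_k}^{a_k}$, and $y\notin C$ forces $x_{i_k}=x_{i_j}$ with $a_k\le 1$, so $a_j=1$. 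If $x_{i_j}\in L_2(C)\cup L_3(C)$ with $w(x_{i_j})\neq 1$, I would show $a_j=w(x_{i_j})$ by contradiction: if $a_j=1$, replacing the generator $x_{i_j}$ by $x_{i_j}^{w(x_{i_j})}$ yields a strictly smaller irreducible ideal $J'$, and since $x_{i_j}\notin L_1(C)$ gives $N_D^+(x_{i_j})\subseteq C$, every edge formerly covered by $x_{i_j}$ is recovered in $J'$, so $I(D)\subseteq J'$, contradicting minimality. Together with the trivial case $w(x_{i_j})=1$, these comparisons give $J=I_C$.

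Showing that $C$ is strong is the more delicate part and is where minimality is used most heavily; I expect it to be the main obstacle. For $x\in L_3(C)$ one has $N_D(x)\subseteq C$ by Proposition \ref{remark-L3}, so Lemma \ref{lemma2} forces the exponent of $x$ in $J$ to be $1$, whence $w(x)=1$ because $J=I_C$. I would prove strongness by generator deletion: if some $x\in L_3(C)$ had no in-neighbour $y\in L_2(C)\cup L_3(C)$ with $w(y)\neq 1$, delete the generator $x$ to form $J'\subsetneq J$ and verify $I(D)\subseteq J'$. An out-edge $(x,b)$ has $b\in C$ and is recovered by the generator of $b$; an in-edge $(a,x)$ yields the monomial $ax$ (as $w(x)=1$), recovered by the generator of $a$ unless $a$ carries exponent $w(a)\neq 1$, i.e. unless $a\in L_2(C)$ with $w(a)\neq 1$, which is exactly what the hypothesis excludes. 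Thus $I(D)\subseteq J'$, contradicting minimality, so $C$ is strong in the sense of Remark \ref{strong-cover}.

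For $2)\Rightarrow 1)$ I would first note that $I_C$ is irreducible and contains $I(D)$ by Lemma \ref{rem01}, so it suffices to show that any irreducible $\mathfrak{p}$ with $I(D)\subseteq\mathfrak{p}\subseteq I_C$ equals $I_C$. The inclusion $\mathfrak{p}\subseteq I_C$ says that each variable of $\mathfrak{p}$ lies in $C$ and carries $\mathfrak{p}$-exponent at least its $I_C$-exponent, while Lemma \ref{lemma1} makes the variable set $C'$ of $\mathfrak{p}$ a vertex cover. For $x\in L_1(C)$ and $x\in L_2(C)$ the defining outside neighbour gives a generator of $I(D)$ in which only $x$ lies in $C$, forcing simultaneously $x\in C'$ and equality of the two exponents of $x$. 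The case $x\in L_3(C)$ is again the crux and is exactly where strongness enters: choosing $(y,x)\in E(D)$ with $y\in L_2(C)\cup L_3(C)$ and $w(y)\neq 1$, the $I_C$-exponent of $y$ is $w(y)\ge 2$, hence the $\mathfrak{p}$-exponent $b_y\ge 2$; since $yx^{w(x)}\in I(D)\subseteq\mathfrak{p}$ cannot be divisible by the power of $y$ (that would need $b_y\le 1$), it is divisible by the power of $x$, forcing $x\in C'$ and its $\mathfrak{p}$-exponent $\le w(x)$, hence equal to $w(x)$. As this argument needs only $b_y\ge 2$, obtained from the inclusion itself, it is not circular even when $y\in L_3(C)$; with all variables and exponents matching, $\mathfrak{p}=I_C$, so $I_C$ is minimal.
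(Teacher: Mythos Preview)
Your overall strategy matches the paper's in both directions, but there is a genuine error in your $1)\Rightarrow 2)$ argument. You claim that for $x\in L_3(C)$, Lemma~\ref{lemma2} forces the exponent of $x$ in $J$ to be $1$, hence $w(x)=1$. This is false: in Example~\ref{example1} the set $V(D)$ is a strong vertex cover with $L_3(V(D))=V(D)$ and every weight $\neq 1$, and $I_{V(D)}=(x_1^3,x_2^4,x_3^5,x_4^2,x_5^2)$ appears in the irredundant decomposition. The misreading is that ``$x\notin\mathcal{G}(J)$'' in Lemma~\ref{lemma2} means $x$ is not a \emph{linear} generator of $J$, not that $x\notin C$; from $N_D(x)\subseteq C$ you only know that the in-neighbours of $x$ lie in $C$, not that they appear with exponent $1$, so the contrapositive does not apply. (In fact a correct application goes the other way: if $x\in L_3(C)$ has $w(x)\neq 1$, Lemma~\ref{lemma2} produces an in-neighbour $y\in C$ with $y\notin\mathcal{G}(J)$, i.e.\ with exponent $\neq 1$, hence $y\in L_2(C)\cup L_3(C)$ with $w(y)\neq 1$, which is strongness at $x$.)

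Fortunately your deletion argument does not actually need $w(x)=1$. For an in-edge $(a,x)$ the monomial is $ax^{w(x)}$, not $ax$; but under the contradiction hypothesis every in-neighbour $a$ of $x$ lies in $L_1(C)$ or has $w(a)=1$, so the generator of $a$ in $I_C=J$ is linear and $a\mid ax^{w(x)}$ regardless of $w(x)$. With this correction, and with ``$a\in L_2(C)$'' replaced by ``$a\in L_2(C)\cup L_3(C)$'' in your unless-clause (since $a\in N_D(x)\subseteq C$ may well lie in $L_3$), the deletion argument is exactly the paper's. In the $2)\Rightarrow 1)$ direction your argument is essentially the paper's; only note that for $y\in L_2(C)\cup L_3(C)$ with $w(y)\neq 1$ it is cleaner to argue ``$y\notin I_C$, hence $y\notin\mathfrak{p}$'' rather than ``$b_y\ge 2$'', since you have not yet established that $y$ belongs to the variable set of $\mathfrak{p}$.
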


\proof
{\it 2)} $\Rightarrow$ {\it 1)} By definition $J=I_C$ is a monomial irreducible ideal. By Lemma~\ref{rem01}, $I(D)\subseteq J$. Now, suppose $I(D)\subseteq J^\prime\subseteq J$, where $J^\prime$ is a monomial irreducible ideal. We can assume $\mathcal{G}(J^\prime)=\{x_{j_{1}}^{b_{1}},\ldots, x_{j_{s}}^{b_{s}}\}$. If $x\in L_1(C)$, then there is $(x,y)\in E(D)$ with $y\notin C$. Hence, $xy^{w(y)}\in I(D)$ and $y^r\notin J$ for each $r\in\mathbb{N}$. Consequently $y^r\notin J^\prime$ for each $r$, implyinig $y\notin\{x_{j_{1}},\ldots, x_{j_{s}}\}$. Furthermore $x_{j_{i}}^{b_{i}}|xy^{w(y)}$ for some $1\leq  i \leq s$, since $xy^{w(y)}\in I(D)\subseteq J^\prime$. This implies, $x=x_{j_{i}}^{b_{i}}\in J^\prime$. Now, if $x\in L_2(C)$, then there is $(y,x)\in E(D)$ with $y\notin C$. Thus $y\notin J$, so $y\notin \{x_{j_{1}}^{b_{1}},\ldots, x_{j_{s}}^{b_{s}}\}$. Also, $x^{w(x)}y\in I(D)\subseteq J^\prime$, then $x_{j_{i}}^{b_{i}}|x^{w(x)}y$ for some $1\leq i\leq s$. Consequently, $x_{j_{i}}^{b_{i}}|x^{w(x)}$ implies $x^{w(x)}\in J^{\prime}$. Finally if $x\in L_3(C)$, then there is $(y,x)\in E(D)$ where $y\in L_2(C)\cup L_{3}(C)$ and $w(y)\neq 1$, since $C$ is a strong vertex cover. So, $x^{w(x)}y\in I(D)\subseteq J^\prime$ implies $x_{j_{i}}^{b_{i}}|x^{w(x)}y$ for some $i$. Furthermore $y\notin J=I_{C}$, since $y\in L_{2}(C)\cup L_{3}(C)$ and $w(y)\neq 1$. This implies $y\notin J^\prime$ so, $x_{j_{i}}^{b_{i}}|x^{w(x)}$ then $x^{w(x)}\in J^\prime$. Hence, $J=I_{C}\subseteq J^\prime$. Therefore, $J$ is a minimal monomial irreducible of $I(D)$.

\vspace{1ex}

\noindent
{\it 1)} $\Rightarrow$ {\it 2)} Since $J$ is irreducible, we can suppose $\mathcal{G}(J)=\{x_{i_{1}}^{a_{1}},\ldots, x_{i_{s}}^{a_{s}}\}$. By Lemma \ref{lemma3}, we have $a_{j}=1$ or $a_{j}=w(x_{i_{j}})$ for each $1\leq j\leq s$. Also, by Lemma \ref{lemma1}, $C=\{x_{i_{1}},\ldots, x_{i_{s}}\}$ is a vertex cover of $D$. We can assume $\mathcal{G}(I_{C})=\{x_{i_{1}}^{b_{1}},\ldots, x_{i_{s}}^{b_{s}}\}$, then $b_{j}\in \{1,w(x_{i_{j}})\}$ for each $1\leq j\leq s$. Now, suppose $b_{k}=1$ and $w(x_{i_{k}})\neq 1$ for some $1\leq k\leq s$. Consequently $x_{i_{k}}\in L_{1}(C)$. Thus, there is $(x_{i_{k}},y)\in E(D)$ where $y\notin C$. So, $x_{i_{k}}y^{w(y)}\in I(D)\subseteq J$ and $x_{i_{r}}^{a_{r}}|x_{i_{k}}y^{w(y)}$ for some $1\leq r\leq s$. Furthermore $y\notin C$, then $r=k$ and $a_{k}=a_{r}=1$. Hence, $I_{C}\cap V(D)\subseteq J\cap V(D)$. This implies, $I_{C}\subseteq J$, since $a_{j},b_{j}\in \{1,w(x_{i_{j}})\}$ for each $1\leq j\leq s$. Therefore $J=I_{C}$, since $J$ is minimal. In particular $a_{i}=b_{i}$ for each $1\leq i\leq s$.

\vspace{1ex}
\noindent
Now, assume $C$ is not strong, then there is $x\in L_{3}(C)$ such that if $(y,x)\in E(D)$, then $w(y)=1$ or $y\in L_{1}(C)$. We can assume $x=x_{i_{1}}$, and we take $J^\prime$ the monomial ideal with $\mathcal{G}(J^\prime)=\{x_{i_{2}}^{a_{2}},\ldots, x_{i_{s}}^{a_{s}}\}$. We take $(z_{1},z_{2})\in E(D)$. If $x_{i_{j}}^{a_{j}}| z_{1}z_{2}^{w(z_{2})}$ for some $2\leq j\leq s$, then $z_{1}z_{2}^{w(z_{2})}\in J^\prime$. Now, assume $x_{i_{j}}^{a_{j}}\nmid z_{1}z_{2}^{w(z_{2})}$ for each $2\leq j\leq s$. Consequently $z_{2}\notin\{x_{i_{2}}\ldots x_{i_{s}}\}$, since $a_{j}\in\{1,w(x_{i_{j}})\}$. Also $z_{1}z_{2}^{w(z_{2})}\in I(D)\subseteq J$, then $x_{i_{1}}^{a_{1}}| z_{1}z_{2}^{w(z_{2})}$. But $x_{i_{1}}\in L_{3}(C)$, so $z_{1}, z_{2}\in N_{G}[x_{i_{1}}]\subseteq C$. If $x_{i_{1}}=z_{1}$, then there is $2\leq r\leq s$ such that $z_{2}=x_{i_{r}}$. Thus $x_{i_{r}}^{a_{r}}\mid z_{1}z_{2}^{w(z_{2})}$. A contradiction, then $x_{i_{1}}=z_{2}$, $z_{1}\in C$ and $(z_{1},x_{i_{1}})\in E(D)$. Then, $w(z_{1})=1$ or $z_{1}\in L_{1}(C)$. In both cases $z_{1}\in \mathcal{G}(I_{C})$. Furthermore $z_{1}\neq z_{2}$ since $(z_{1}, z_{2})\in E(D)$. This implies $z_{1}\in \mathcal{G}(J^{\prime})$. So, $z_{1}z_{2}^{w(z_{2})}\in J^{\prime}$. Hence, $I(D)\subseteq J^{\prime}$. This is a contradiction, since $J$ is minimal. Therefore $C$ is strong. 

\qed

\begin{thm}\label{decomposition}
If $\mathcal{C}_{s}$ is the set of strong vertex covers of $D$, then the irredundant irreducible decomposition of $I(D)$ is given by $I(D)=\bigcap_{C\in \mathcal{C}_{s}} I_{C}$. 
\end{thm}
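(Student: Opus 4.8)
The plan is to deduce this from Theorem~\ref{thm-irredDecom} together with the standard structure theory of monomial ideals recalled at the start of this section. Recall that $I(D)$ admits a \emph{unique} irredundant irreducible decomposition $I(D)=\mathfrak{q}_1\cap\cdots\cap\mathfrak{q}_r$ into irreducible monomial ideals (see \cite[Chapter 6]{monalg-2}). The whole argument rests on the following bridge between the abstract components $\mathfrak{q}_i$ and the concrete ideals $I_C$: the components of the irredundant irreducible decomposition of a monomial ideal are precisely its minimal irreducible monomial ideals. Granting this, Theorem~\ref{thm-irredDecom} identifies the set of minimal irreducible monomial ideals of $I(D)$ with $\{I_C : C\in\mathcal{C}_s\}$, and the equality $I(D)=\bigcap_{C\in\mathcal{C}_s}I_C$ together with its irredundancy falls out.

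First I would establish the bridge. The inclusion $I(D)\subseteq\bigcap_{C\in\mathcal{C}_s}I_C$ is already given by Lemma~\ref{rem01}. For the reverse inclusion I would show that each component $\mathfrak{q}_i$ is a minimal irreducible monomial ideal of $I(D)$: if $I(D)\subseteq\mathfrak{p}\subseteq\mathfrak{q}_i$ with $\mathfrak{p}$ irreducible, then intersecting with the remaining factors gives $I(D)=\mathfrak{p}\cap\bigcap_{j\neq i}\mathfrak{q}_j$; since the original decomposition is irredundant we have $I(D)\subsetneq\bigcap_{j\neq i}\mathfrak{q}_j$, so $\mathfrak{p}$ cannot be discarded when pruning this new decomposition to an irredundant subfamily, and uniqueness then forces that subfamily to coincide with $\{\mathfrak{q}_1,\dots,\mathfrak{q}_r\}$. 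As the components are pairwise incomparable and $\mathfrak{p}\subseteq\mathfrak{q}_i$, this yields $\mathfrak{p}=\mathfrak{q}_i$. Hence each $\mathfrak{q}_i$ is minimal, so by the implication $1)\Rightarrow 2)$ of Theorem~\ref{thm-irredDecom} there is a strong vertex cover $C_i$ with $\mathfrak{q}_i=I_{C_i}$, giving $\bigcap_{C\in\mathcal{C}_s}I_C\subseteq\bigcap_{i=1}^r I_{C_i}=\bigcap_{i=1}^r\mathfrak{q}_i=I(D)$ and therefore the desired equality.

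It remains to check irredundancy, i.e. that no $I_C$ may be dropped. By the implication $2)\Rightarrow 1)$ of Theorem~\ref{thm-irredDecom}, every $I_C$ with $C\in\mathcal{C}_s$ is a minimal irreducible monomial ideal of $I(D)$; conversely the previous paragraph shows each component equals some $I_{C_i}$. Combining both with the bridge gives $\{I_C : C\in\mathcal{C}_s\}=\{\mathfrak{q}_1,\dots,\mathfrak{q}_r\}$. Since the assignment $C\mapsto I_C$ is injective, because $C$ is recovered as the set of variables occurring in $\mathcal{G}(I_C)$, this is a bijection onto the component set, and the components of an irredundant decomposition are pairwise incomparable, so removing any single $I_C$ strictly enlarges the intersection.

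The main obstacle I expect is the bridge itself, namely pinning down that the irredundant components are exactly the minimal irreducible monomial ideals, which is where the uniqueness of the irredundant irreducible decomposition must be invoked carefully (the pruning-and-uniqueness step above). Once that is in hand, Theorem~\ref{thm-irredDecom} does all the combinatorial work and the remainder is bookkeeping with inclusions.
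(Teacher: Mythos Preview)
Your overall strategy coincides with the paper's, and your argument for the equality $I(D)=\bigcap_{C\in\mathcal{C}_s}I_C$ is sound. The gap is in the irredundancy step, specifically in your ``bridge''. That bridge has two halves: (a) every component $\mathfrak{q}_i$ of the irredundant decomposition is a minimal irreducible monomial ideal of $I(D)$; and (b) every minimal irreducible monomial ideal of $I(D)$ occurs among the components. Your pruning-and-uniqueness argument establishes only (a). It does not yield (b): if $J$ is minimal irreducible over $I(D)$ with $J\notin\{\mathfrak{q}_1,\dots,\mathfrak{q}_r\}$, then in the enlarged family $\{J,\mathfrak{q}_1,\dots,\mathfrak{q}_r\}$ the ideal $J$ is already redundant (because $\bigcap_i\mathfrak{q}_i=I(D)$), so pruning may simply discard $J$ first and uniqueness produces no contradiction. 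Yet it is precisely (b) that you invoke in your third paragraph to conclude $\{I_C:C\in\mathcal{C}_s\}=\{\mathfrak{q}_1,\dots,\mathfrak{q}_r\}$.

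The paper supplies exactly this missing half via an lcm argument: assuming some $I_C$ is not among the $\mathfrak{q}_i$, one picks monomial generators $m_i\in\mathfrak{q}_i\setminus I_C$ (possible since $I_C$ minimal and $\mathfrak{q}_i\neq I_C$ force $\mathfrak{q}_i\not\subseteq I_C$), and observes that $\operatorname{lcm}(m_1,\dots,m_r)\in\bigcap_i\mathfrak{q}_i=I(D)\subseteq I_C$; hence some pure-power generator of $I_C$ divides this lcm and therefore divides a single $m_i$, contradicting $m_i\notin I_C$. Insert this step and your proof is complete and essentially identical to the paper's.
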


\proof
By \cite[Theorem 1.3.1]{Herzog-Hibi}, there is a unique irredundant irreducible decomposition $I(D)=\bigcap_{i=1}^{m}I_{i}$. If there is an irreducible ideal $I_{j}^{\prime}$ such that $I(D)\subseteq I_{j}^{\prime}\subseteq I_{j}$ for some $j\in\{1,\ldots,m\}$, then $I(D)=(\bigcap_{i\neq j}^{}I_{i})\cap I_{j}^{\prime}$ is an irreducible decomposition. Furthermore this decomposition is irredundant. Thus, $I_{j}^{\prime}=I_{j}$. Hence, $I_{1},\ldots, I_{m}$ are minimal irreducible ideals of $I(D)$. Now, if there is $C\in \mathcal{C}_{s}$ such that $I_{C}\notin\{I_{1},\ldots,I_{m}\}$, then there is $x_{j_{i}}^{\alpha_{i}}\in I_{i}\setminus I_{C}$ for each $i\in\{1,\ldots,m\}$. Consequently, $m=lcm(x_{j_{1}}^{\alpha_{1}},\ldots, x_{j_{m}}^{\alpha_{m}})\in\bigcap_{i=1}^{m}I_{i}=I(D)\subseteq I_{C}$. Furthermore, if $C=\{x_{i_1},\ldots, x_{i_{k}}\}$, then $I_{C}=(x_{i_1}^{\beta_1},\ldots, x_{i_{k}}^{\beta_k})$ where $\beta_{j}\in\{1,w(x_{i_{j}})\}$. Hence, there is $j\in \{1,\ldots,k\}$ such that $x_{i_{j}}^{\beta_j}|m$. So, there is $1\leq u\leq m$ such that $x_{i_{j}}^{\beta_j}\mid x_{j_{u}}^{\alpha_u}$. A contradiction, since $x_{j_{u}}^{\alpha_{u}}\notin I_{C}$. Therefore $I(D)=\bigcap_{C\in \mathcal{C}_{s}}I_{C}$ is the irredundant irreducible decomposition of $I(D)$. 
\qed

\begin{rem}\label{ass-primes}
If $C_{1},\ldots, C_{s}$ are the strong vertex covers of $D$, then by Theorem \ref{decomposition}, $I_{C_{1}}\cap\cdots\cap I_{C_{s}}$ is the irredundant irreducible decomposition of $I(D)$. Furthermore, if $P_i=rad(I_{C_i})$, then $P_{i}=(C_i)$. So, $P_{i}\neq P_{j}$ for $1\leq i<j\leq s$. Thus, $I_{C_{1}}\cap\cdots\cap I_{C_{s}}$ is an irredundant primary decomposition of $I(D)$. In particular we have ${\rm Ass}(I(D))=\{P_{1},\ldots, P_{s}\}$.
\end{rem}

\begin{exmp}\label{example1}
Let $D$ be the following oriented weighted graph 
\begin{center}
\begin{tikzpicture}[line width=1.1pt,scale=0.95]
		\tikzstyle{every node}=[inner sep=0pt, minimum width=4.5pt] 

\draw [->] (-1.45,.45)--(-0.1,1.7); 
\draw [->] (-1.05,-1.3)--(-1.5,.3); 
\draw [->] (-.93,-1.31) --(1.4,.35); 
\draw [->] (.9,-1.38)--(-.9,-1.38); 
\draw [->] (1.45,.35)--(1,-1.28); 

\draw (-1,-1.38) node (v3) [draw, circle, fill=gray] {};
\draw (1,-1.38) node (v4) [draw, circle, fill=gray] {};
\draw (1.5,.4) node (v5) [draw, circle, fill=gray] {};
\draw (0,1.7) node (v1)[draw, circle, fill=gray] {};
\draw (-1.5,.4) node (v2) [draw, circle, fill=gray] {};

\node at (-1.5,-1.35) {$x_{3}$};
\node at (1.5,-1.38) {$x_{4}$};
\node at (1.7,.8) {$x_{5}$};
\node at (0,2.1) {$x_{1}$};
\node at (-2,.4) {$x_{2}$};

\node at (-.9,-1.7) {$5$};
\node at (1.1,-1.7) {$2$};
\node at (1.7,.10) {$2$};
\node at (0,1.4) {$3$};
\node at (-1.2,.4) {$4$};

\end{tikzpicture}
\end{center}
\noindent
whose edge ideal is $I(D)=(x_{1}^{3}x_{2},x_{2}^{4}x_{3},x_{3}^{5}x_{4},x_{3}x_{5}^{2},x_{4}^{2}x_{5})$. From Theorem \ref{thm-irredDecom} and Theorem \ref{decomposition}, the irreducible decomposition of $I(D)$ is:  
\begin{center}
$I(D)=(x_{1}^{3},x_{3},x_{4}^{2})\cap(x_{1}^{3},x_{3},x_{5})\cap(x_{2},x_{3},x_{4}^{2})\cap(x_{2},x_{3}^{5},x_{5})\cap(x_{2},x_{4},x_{5}^{2})\cap(x_{1}^{3},x_{2}^{4},x_{3}^{5},x_{5})\cap(x_{1}^{3},x_{2}^{4},x_{4},x_{5}^{2})\cap(x_{2},x_{3}^{5},x_{4}^{2},x_{5}^{2})\cap(x_{1}^{3},x_{2}^{4},x_{3}^{5},x_{4}^{2},x_{5}^{2})$.
\end{center}

\end{exmp}

\begin{exmp}\label{example2}
Let $D$ be the following oriented weighted graph 
\begin{center}
\begin{tikzpicture}[line width=1.1pt,scale=0.95]
		\tikzstyle{every node}=[inner sep=0pt, minimum width=4.5pt] 

\draw [->] (0,0) --(1.9,0); 
\draw [->] (2.1,0) --(3.9,0); 
\draw [->] (4.1,0) --(5.9,0); 

	\draw (0,0) node (x1) [draw, circle, fill=gray] {};
	\draw (2,0) node (x2) [draw, circle, fill=gray] {};
		\draw (4,0) node (x3) [draw, circle, fill=gray] {};
			\draw (6,0) node (x4) [draw, circle, fill=gray] {};

\node at (.2,.3) {$x_{1}$};
\node at (2.2,.3) {$x_{2}$};
\node at (4.2,.3) {$x_{3}$};
\node at (6.2,.3) {$x_{4}$};

\node at (2.2,-.3) {$2$};
\node at (4.2,-.3) {$5$};
\node at (6.2,-.3) {$7$};

\end{tikzpicture}
\end{center} 
\noindent
Hence, $I(D)=(x_{1}x_{2}^{2},x_{2}x_{3}^{5},x_{3}x_{4}^{7})$. By Theorem \ref{thm-irredDecom} and Theorem \ref{decomposition}, the irreducible decomposition of $I(D)$ is: 
\begin{center}
$I(D)=(x_{1},x_{3})\cap(x_{2}^{2},x_{3})\cap(x_{2},x_{4}^{7})\cap(x_{1},x_{3}^{5},x_{4}^{7})\cap(x_{2}^{2},x_{3}^{5},x_{4}^{7})$.
\end{center}

\end{exmp}

\vspace{1ex}

In Example \ref{example1} and Example \ref{example2}, $I(D)$ has embedding primes. Furthermore the monomial ideal $(V(D))$ is an associated prime of $I(D)$ in Example \ref{example1}. Proposition \ref{prop-unicycle} and Remark \ref{ass-primes} give a combinatorial criterion for to decide when $(V(D))\in {\rm Ass}(I(D))$.

\section{Unmixed weighted oriented graphs}
Let $D=(V,E,w)$ be a weighted oriented graph whose underlying graph is $G=(V,E)$. In this section we characterize the unmixed property of $I(D)$ and we prove that this property is closed under c-minors. In particular if $G$ is a bipartite graph or $G$ is a whisker or $G$ is  a cycle, we give an effective (combinatorial) characterization of this property.  

\begin{defn}
An ideal $I$ is {\it unmixed\/} if each one of its associated primes has the same height.
\end{defn}

\begin{thm}\label{unmixed}
The following conditions are equivalent: 
\begin{enumerate}[1)]
\item $I(D)$ is unmixed.
\item Each strong vertex cover of $D$ has the same cardinality. 
\item $I(G)$ is unmixed and $L_{3}(C)=\emptyset$ for each strong vertex cover $C$ of $D$.
\end{enumerate}
\end{thm}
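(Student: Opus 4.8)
The plan is to prove the three conditions equivalent by establishing the cycle $1)\Rightarrow 3)\Rightarrow 2)\Rightarrow 1)$, using the combinatorial machinery developed in Sections 2 and 3 to translate the algebraic unmixedness of $I(D)$ into statements about strong vertex covers. The key dictionary is Remark \ref{ass-primes}: the associated primes of $I(D)$ are exactly the ideals $(C)$ as $C$ ranges over the strong vertex covers $\mathcal{C}_s$, and $\mathrm{height}(P_C)=|C|$ since $P_C=(C)$ is generated by the variables indexed by $C$. Thus $I(D)$ is unmixed precisely when all strong vertex covers have the same cardinality, which is the content of $1)\Leftrightarrow 2)$; I would record this first as the backbone of the argument.

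For $1)\Rightarrow 3)$, I would argue that if $I(D)$ is unmixed then all strong vertex covers share a common cardinality. Since every minimal vertex cover of $D$ is strong by Corollary \ref{C-minimal-strong}, and minimal vertex covers of $D$ coincide with minimal vertex covers of the underlying graph $G$ (vertex covers depend only on the edge set $\{x,y\}$, not on orientation or weights), the minimal vertex covers of $G$ all have the same cardinality; this is exactly unmixedness of $I(G)$, the classical edge ideal. For the condition $L_3(C)=\emptyset$: if some strong cover $C$ had $L_3(C)\neq\emptyset$, I would aim to produce a strong vertex cover of strictly smaller cardinality, contradicting $2)$. The natural candidate is to delete a vertex of $L_3(C)$ or to pass to a minimal subcover; by Proposition \ref{minimalcover-L3}, $C$ strictly contains a minimal vertex cover $C'$, and $C'$ is strong with $|C'|<|C|$, again contradicting the common-cardinality condition. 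This gives $L_3(C)=\emptyset$ for every strong $C$.

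For $3)\Rightarrow 2)$, I would take two strong vertex covers $C_1,C_2$ and show $|C_1|=|C_2|$. The hypothesis $L_3(C_i)=\emptyset$ forces each $C_i$ to be a minimal vertex cover by Proposition \ref{minimalcover-L3}; these are then minimal vertex covers of $G$, and since $I(G)$ is unmixed all minimal vertex covers of $G$ have equal cardinality, yielding $|C_1|=|C_2|$. Combined with $2)\Rightarrow 1)$ from the dictionary above, the cycle closes.

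The main obstacle I anticipate is the step in $1)\Rightarrow 3)$ that produces a contradiction from $L_3(C)\neq\emptyset$: one must be careful that deleting vertices or passing to a minimal subcover genuinely lands in $\mathcal{C}_s$ with strictly smaller size, and that the interplay between $L_1,L_2,L_3$ is handled correctly (for instance, ensuring the minimal subcover is distinct from $C$ exactly when $L_3(C)\neq\emptyset$). The cleanest route is to lean entirely on Proposition \ref{minimalcover-L3} and Corollary \ref{C-minimal-strong} rather than constructing a new cover by hand: $L_3(C)\neq\emptyset$ means $C$ is not minimal, so a proper minimal subcover $C'\subsetneq C$ exists, it is automatically strong, and $|C'|<|C|$ breaks the common-cardinality hypothesis. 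Verifying that no subtlety about the weights $w$ enters this purely combinatorial counting argument is the delicate point to get right.
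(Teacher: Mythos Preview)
Your proposal is correct and follows essentially the same approach as the paper: both arguments rest on the dictionary from Remark~\ref{ass-primes} (associated primes of $I(D)$ are the ideals $(C)$ for $C$ a strong vertex cover), together with Proposition~\ref{minimalcover-L3} and Corollary~\ref{C-minimal-strong} to pass between minimality and $L_3(C)=\emptyset$. The paper orders the implications as $1)\Rightarrow 2)\Rightarrow 3)\Rightarrow 1)$ rather than your $1)\Rightarrow 3)\Rightarrow 2)\Rightarrow 1)$, but the content of each step is the same, and your worry about the ``main obstacle'' is unfounded: passing from a non-minimal strong $C$ to a strictly smaller minimal (hence strong) subcover is exactly the mechanism the paper uses as well.
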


\proof Let $C_{1},\ldots , C_{\ell}$ be the strong vertex covers of $D$. By Remark \ref{ass-primes}, the associated primes of $I(D)$ are $P_{1},\ldots, P_{\ell}$, where $P_{i}=rad(I_{C_{i}})=(C_{i})$ for $1\leq i \leq \ell$. 

\vspace{1ex}
\noindent
$1) \Rightarrow 2)$ Since $I(D)$ is unmixed, $|C_{i}|= ht(P_{i})=ht(P_{j})=|C_{j}|$ for $1\leq i<j\leq\ell$. 

\vspace{1ex}
\noindent
$2) \Rightarrow 3)$ If $C$ is a minimal vertex cover, then by Corollary \ref{C-minimal-strong}, $C\in\{C_{1},\ldots, C_{\ell}\}$. By hypothesis, $|C_{i}|=|C_{j}|$ for each $1\leq i\leq j\leq \ell$, then $C_{i}$ is a minimal vertex cover of $D$. Thus, by Lemma \ref{minimalcover-L3}, $L_{3}(C_{i})=\emptyset$. Furthermore $I(G)$ is unmixed, since $C_{1},\ldots,C_{\ell}$ are the minimal vertex covers of $G$. 

\vspace{1ex}
\noindent
$3) \Rightarrow 1)$  
By Proposition \ref{minimalcover-L3}, $C_{i}$ is a minimal vertex cover, since $L_{3}(C_{i})=\emptyset$ for each $1\leq i \leq \ell$. This implies, $C_{1},\ldots,C_{\ell}$ are the minimal vertex covers of $G$. Since $G$ is unmixed, we have $|C_{i}|=|C_{j}|$ for $1\leq i<j\leq \ell$. Therefore $I(D)$ is unmixed. 
\qed

\begin{defn}\label{star-property}
A weighted oriented graph $D$ has the {\it minimal-strong property\/} if each strong vertex cover is a minimal vertex cover. 
\end{defn}

\begin{rem}\label{star-L3}
Using Proposition \ref{minimalcover-L3}, we have that $D$ has the minimal-strong property if and only if $L_{3}(C)=\emptyset$ for each strong vertex cover $C$ of $D$. 
\end{rem}

\begin{defn}
$D^\prime$ is a {\it c-minor\/} of $D$ if there is a stable set $S$ of $D$, such that $D^\prime =D\setminus N_{G}[S]$.
\end{defn}

\begin{lem}\label{star-lemma}
If $D$ has the minimal-strong property, then $D^\prime =D\setminus N_{G}[x]$ has the minimal-strong property, for each $x\in V$.
\end{lem}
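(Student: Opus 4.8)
The plan is to prove the equivalent statement from Remark \ref{star-L3}: that $L_3(C')=\emptyset$ for every strong vertex cover $C'$ of $D'=D\setminus N_G[x]$. The idea is to \emph{lift} such a $C'$ to a vertex cover of the ambient graph $D$ by covering the deleted neighbours of $x$, namely to set $C:=C'\cup N_G(x)$, and then to exploit the minimal-strong property of $D$ on $C$. First I would check that $C$ is indeed a vertex cover of $D$: any edge with both endpoints in $V(D')$ is covered by $C'$, any edge incident to $x$ has its other endpoint in $N_G(x)\subseteq C$, and any edge incident to a vertex of $N_G(x)$ is covered by that vertex. Note $x\notin C$, so the complements satisfy the clean identity $C^{c}=V\setminus(C'\cup N_G(x))=(C')^{c}\cup\{x\}$, where $(C')^{c}$ denotes the complement taken inside $V(D')$.

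The key structural step is the identity $L_3(C)=L_3(C')$. Using the description $L_3(\,\cdot\,)=\{v:N_D(v)\subseteq C\}$ from Proposition \ref{remark-L3}: if $v\in L_3(C)$ then $N_D(v)\subseteq C$ forces $v$ to be non-adjacent to $x\in C^{c}$, hence $v\in V(D')$ and in fact $v\in C\cap V(D')=C'$; restricting, $N_{D'}(v)=N_D(v)\cap V(D')\subseteq C'$, so $v\in L_3(C')$. Conversely, for $v\in L_3(C')$ one has $v\notin N_G(x)$, so the only neighbours of $v$ outside $V(D')$ lie in $N_G(x)\subseteq C$, while $N_{D'}(v)\subseteq C'\subseteq C$; thus $N_D(v)\subseteq C$ and $v\in L_3(C)$. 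With this identity in hand, once I show $C$ is a \emph{strong} vertex cover of $D$, the minimal-strong hypothesis on $D$ (via Remark \ref{star-L3}) gives $L_3(C)=\emptyset$, hence $L_3(C')=\emptyset$, which is exactly what is needed.

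To verify that $C$ is strong I would transport the witnesses guaranteed by strongness of $C'$. Fix $v\in L_3(C)=L_3(C')$; since $C'$ is strong in $D'$ there is $(y,v)\in E(D')\subseteq E(D)$ with $y\in L_2(C')\cup L_3(C')=C'\setminus L_1(C')$ and $w'(y)\ne 1$. I claim $y$ serves as a witness for $v$ in $C$. Indeed $y\in C'\subseteq C$, and $y\notin L_1(C)$: the deleted vertex $x$ is not an out-neighbour of $y$ (else $y\in N_G(x)$, impossible as $y\in V(D')$), while every out-neighbour of $y$ lying in $V(D')$ belongs to $C'$ because $y\notin L_1(C')$; combined with $C^{c}=(C')^{c}\cup\{x\}$ this gives $N_D^{+}(y)\cap C^{c}=\emptyset$. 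Hence $y\in L_2(C)\cup L_3(C)$, and $(y,v)\in E(D)$, so $C$ is strong by Definition \ref{strong-cover-defn}.

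The one delicate point — the main obstacle — is the weight bookkeeping for the witness: the c-minor convention of Remark \ref{rem3.3} resets to $1$ the weight of any vertex that becomes a source in $D'$, so a priori the weight used in $D$ could differ from $w'$. However, this convention only ever \emph{lowers} a weight to $1$, so from $w'(y)\ne 1$ I can conclude $w'(y)=w(y)\ne 1$, and the witness remains valid in $D$ with $w(y)\ne 1$. Assembling the pieces: $C$ is a strong vertex cover of $D$, so $L_3(C)=\emptyset$ by the minimal-strong property, whence $L_3(C')=\emptyset$; by Remark \ref{star-L3} this proves $D'$ has the minimal-strong property.
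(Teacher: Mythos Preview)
Your proof is correct and follows essentially the same approach as the paper: lift $C'$ to $C=C'\cup N_G(x)$, establish $L_3(C)=L_3(C')$, verify $C$ is strong by transporting the witness $y$ from $D'$ to $D$, and then invoke the minimal-strong property of $D$. Your treatment is in fact slightly more careful than the paper's in two places: you explicitly rule out $x$ as a possible uncovered out-neighbour of the witness $y$ (the paper's argument ``$z'\notin C\Rightarrow z'\notin C'$'' tacitly uses this), and you address the source-weight convention of Remark~\ref{rem3.3}, which the paper does not mention.
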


\proof
We take a strong vertex cover $C^\prime$ of $D^\prime =D\setminus N_{G}[x]$ where $x\in V$. Thus, $C=C^\prime \cup N_{D}(x)$ is a vertex cover of $D$. If $y^\prime\in L_{3}(C^\prime)$, then by Proposition \ref{remark-L3}, $N_{D^\prime}(y^\prime)\subseteq C^\prime$. Consequently, $N_{D}(y^\prime)\subseteq C^\prime \cup N_{D}(x)=C$ implying $y^\prime\in L_{3}(C)$. Hence, $L_{3}(C^{\prime})\subseteq L_{3}(C)$. Now, we take $y\in L_{3}(C)$, then $N_{D}(y)\subseteq C$. This implies $y\notin N_{D}(x)$, since $x\notin C$. Then, $y\in C^\prime$ and $N_{D^{\prime}}(y)\cup(N_{D}(y)\cap N_{D}(x))=N_{D}(y)\subseteq C=C^{\prime}\cup N_{D}(x)$. So, $N_{D^{\prime}}(y)\subseteq C^{\prime}$ implies $y\in L_{3}(C^{\prime})$. Therefore $L_{3}(C)=L_{3}(C^{\prime})$.

\noindent
Now, if $y\in L_{3}(C)=L_{3}(C^{\prime})$, then there is $z\in C^\prime\setminus L_{1}(C^{\prime})$ with $w(z)\neq 1$, such that $(z,y)\in E(D^{\prime})$. If $z\in L_{1}(C)$, then there exist $z^\prime\notin C$ such that $(z,z^\prime)\in E(D)$. Since $z^\prime\notin C$, we have $z^{\prime}\notin C^{\prime}$, then $z\in L_{1}(C^\prime)$. A contradiction, consequently $z\notin L_{1}(C)$. Hence, $C$ is strong. This implies $L_{3}(C)=\emptyset$, since $D$ has the minimal-strong property. Thus, $L_{3}(C^\prime)=L_{3}(C)=\emptyset$. Therefore $D^\prime$ has the minimal-strong property.
\qed

\begin{prop}\label{N-unmixed}
If $D$ is unmixed and $x\in V$, then $D^{\prime}=D\setminus N_{G}[x]$ is unmixed.
\end{prop}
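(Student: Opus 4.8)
The plan is to prove Proposition \ref{N-unmixed} by combining the cardinality criterion from Theorem \ref{unmixed} (namely, $I(D)$ is unmixed iff all strong vertex covers of $D$ have the same cardinality) with the structural relationship between strong vertex covers of $D$ and those of $D^\prime = D\setminus N_G[x]$ that is already developed in the proof of Lemma \ref{star-lemma}. The key observation is that passing from a strong cover $C^\prime$ of $D^\prime$ to $C = C^\prime \cup N_D(x)$ changes the cardinality in a controlled way, so equality of cardinalities in $D$ should force equality of cardinalities in $D^\prime$.

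First I would take two arbitrary strong vertex covers $C_1^\prime$ and $C_2^\prime$ of $D^\prime$; by Theorem \ref{unmixed}, it suffices to show $|C_1^\prime| = |C_2^\prime|$. The natural candidate lift is $C_i = C_i^\prime \cup N_D(x)$. I would first verify that each $C_i$ is in fact a \emph{strong} vertex cover of $D$: it is a vertex cover because $C_i^\prime$ covers all edges of $D^\prime$ and $N_D(x)$ together with $x$'s incidences covers every edge meeting $N_G[x]$ (note $x$ itself has all neighbors in $C_i$, so $x\notin C_i$ is fine). The strongness of $C_i$ is exactly the content of the first half of the proof of Lemma \ref{star-lemma}, where it is shown that $L_3(C_i) = L_3(C_i^\prime)$ and that the strong-cover witnesses for vertices in $L_3(C_i^\prime)$ remain valid in $D$ (the witness $z$ stays out of $L_1(C_i)$). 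Since $D$ is unmixed, Theorem \ref{unmixed} gives $|C_1| = |C_2|$.

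The remaining step is to convert $|C_1| = |C_2|$ into $|C_1^\prime| = |C_2^\prime|$. Since $C_i = C_i^\prime \sqcup (N_D(x)\setminus C_i^\prime)$ and $C_i^\prime \subseteq V(D^\prime) = V \setminus N_G[x]$ is disjoint from $N_D(x)$, we have the clean decomposition $|C_i| = |C_i^\prime| + |N_D(x)|$, because $C_i^\prime$ contains no vertex of $N_G[x]$ while $N_D(x)\subseteq N_G[x]$. The additive constant $|N_D(x)|$ is the same for both $i=1,2$, so $|C_1|=|C_2|$ immediately yields $|C_1^\prime| = |C_2^\prime|$. Invoking Theorem \ref{unmixed} once more, all strong vertex covers of $D^\prime$ have equal cardinality, hence $I(D^\prime)$ is unmixed.

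The main obstacle I anticipate is the careful bookkeeping in the disjointness claim $C_i^\prime \cap N_D(x) = \emptyset$ and the verification that $C_i = C_i^\prime \cup N_D(x)$ is genuinely a vertex cover of all of $D$ (not merely of $D^\prime$), since edges incident to $x$ or internal to $N_G[x]$ must be handled separately from edges of $D^\prime$. Fortunately, the strongness half of this argument is already packaged in Lemma \ref{star-lemma}, so the novel work here is essentially the cardinality accounting; I would lean on that lemma's internal computation rather than reprove it. One should also double-check the edge case where $N_D(x)$ overlaps $C_i^\prime$ through shared neighbors outside $N_G[x]$ — but by construction $N_D(x)\subseteq N_G[x]$ and $C_i^\prime\subseteq V\setminus N_G[x]$, so no such overlap occurs, which is what makes the count exact.
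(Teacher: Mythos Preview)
Your proof is correct but follows a different route from the paper's. The paper invokes characterization (3) of Theorem~\ref{unmixed}: from $D$ unmixed it extracts that $G$ is unmixed and $D$ has the minimal-strong property; it then cites an external result (\cite{monalg-2}) to conclude that $G\setminus N_G[x]$ is unmixed, and applies Lemma~\ref{star-lemma} as a black box to get the minimal-strong property for $D'$, finishing by Theorem~\ref{unmixed} again. Your argument instead uses characterization (2) of Theorem~\ref{unmixed} (equal cardinality of strong covers) together with the \emph{internal} computation of Lemma~\ref{star-lemma}'s proof---specifically, that the lift $C'\mapsto C'\cup N_D(x)$ sends strong covers of $D'$ to strong covers of $D$---and then a straightforward cardinality count. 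What your approach buys is self-containment: you avoid the external citation for the graph-theoretic fact that unmixedness of $I(G)$ passes to $I(G\setminus N_G[x])$, which is a genuine simplification. What the paper's approach buys is modularity: it uses Lemma~\ref{star-lemma} only through its statement, not its proof, whereas you rely on an intermediate claim established inside that proof (namely that $C=C'\cup N_D(x)$ is strong), which would be cleaner if isolated as its own lemma.
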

\proof
By Theorem \ref{unmixed}, $G$ is unmixed and $D$ has the minimal-strong property. Hence, by \cite{monalg-2}, $G^\prime=G\setminus N_{G}[x]$ is unmixed. Also, by Lemma \ref{star-lemma} we have that $D^\prime$ has the minimal-strong property. Therefore, by Theorem \ref{unmixed}, $D^{\prime}$ is unmixed.
\qed

\begin{thm}\label{c-minor-umixed}
If $D$ is unmixed, then a c-minor of $D$ is unmixed.
\end{thm}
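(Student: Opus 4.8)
The plan is to argue by induction on the cardinality of the stable set $S$ used to form the c-minor $D'=D\setminus N_{G}[S]$, using Proposition~\ref{N-unmixed} as both the base case and the engine of the inductive step. If $S=\emptyset$ then $D'=D$ is unmixed, and if $|S|=1$ the statement is precisely Proposition~\ref{N-unmixed}.

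For the inductive step, suppose the claim holds for all stable sets of cardinality less than $k$, and let $S=\{x_{1},\ldots,x_{k}\}$ be a stable set of $D$ with $k\geq 2$. Put $D_{1}=D\setminus N_{G}[x_{1}]$; by Proposition~\ref{N-unmixed}, $D_{1}$ is unmixed. The key point is that $D'$ is again a c-minor, now of $D_{1}$. Since $S$ is stable, none of $x_{2},\ldots,x_{k}$ lies in $N_{G}[x_{1}]$, so these vertices survive in $D_{1}$ and, no edge being created by vertex deletion, they form a stable set $S'=\{x_{2},\ldots,x_{k}\}$ of $D_{1}$.

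Writing $G_{1}$ for the underlying graph of $D_{1}$, one has $N_{G_{1}}[x_{i}]=N_{G}[x_{i}]\setminus N_{G}[x_{1}]$ for each $i\geq 2$, so deleting $N_{G_{1}}[S']$ from $D_{1}$ removes exactly the vertices in $(N_{G}[x_{2}]\cup\cdots\cup N_{G}[x_{k}])\setminus N_{G}[x_{1}]$. Hence $V(D_{1})\setminus N_{G_{1}}[S']=V\setminus N_{G}[S]$, that is, $D_{1}\setminus N_{G_{1}}[S']=D'$. Applying the inductive hypothesis to the unmixed graph $D_{1}$ and its stable set $S'$ of cardinality $k-1$ yields that $D'$ is unmixed, which closes the induction.

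The substance of the argument beyond Proposition~\ref{N-unmixed} is confined to this factorization: that removing $N_{G}[S]$ can be carried out by first deleting $N_{G}[x_{1}]$ and then deleting, inside the resulting smaller graph, the closed neighborhood of the remaining set $S'$. I expect the only mildly delicate point to be the verification that $S'$ is still a stable set of $D_{1}$ and that its vertices are not among those already removed---both of which rest on the stability of $S$---rather than any deeper structural obstacle.
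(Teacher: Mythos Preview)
Your proposal is correct and follows essentially the same approach as the paper: both argue by induction on $|S|$, invoking Proposition~\ref{N-unmixed} to peel off one vertex at a time, with the key observation that stability of $S$ ensures the factorization $D\setminus N_{G}[S]=(\cdots(D\setminus N_{G}[a_{1}])\setminus\cdots)\setminus N_{G}[a_{s}]$. The paper's proof is considerably more terse, simply asserting this factorization in one line, whereas you spell out carefully why $S'$ remains stable in $D_{1}$ and why the closed neighborhoods match up; this extra care is warranted and does not constitute a different method.
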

\proof
If $D^{\prime}$ is a c-minor of $D$, then there is a stable $S=\{a_{1},\ldots,a_{s}\}$ such that $D^\prime=D\setminus N_{G}[S]$. Since $S$ is stable, $D^{\prime}=(\cdots ((D\setminus N_{G}[a_{1}])\setminus N_{G}[a_{2}])\setminus \cdots )\setminus N_{G}[a_{s}]$. Hence, by induction and Proposition \ref{N-unmixed}, $D^\prime$ is unmixed. 
\qed

\begin{prop}\label{prop-mixed}
If $V(D)$ is a strong vertex cover of $D$, then $I(D)$ is mixed.
\end{prop}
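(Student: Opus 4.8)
The plan is to show that $I(D)$ fails to be unmixed by exhibiting two strong vertex covers of different cardinalities, and then to invoke the equivalence $1)\Leftrightarrow 2)$ of Theorem \ref{unmixed}. Recall that by Remark \ref{ass-primes} the associated primes of $I(D)$ are precisely the $(C)$ with $C$ a strong vertex cover, and $\mathrm{ht}((C))=|C|$; so producing strong covers of distinct sizes is exactly what makes the ideal mixed.

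First I would take $V=V(D)$ itself as one of the two covers: by hypothesis it is a strong vertex cover, and it has cardinality $n=|V|$. I would also record that $V\neq\emptyset$, since by Remark \ref{V-strong} the strongness of $V$ forces every vertex to have an in-neighbour in $V^{+}$, so $D$ has at least one edge and $V$ is nonempty.

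Next I would produce a strictly smaller strong cover. By Remark \ref{V-strong} we have $L_{3}(V)=V\neq\emptyset$, so by Proposition \ref{minimalcover-L3} the cover $V$ is \emph{not} minimal. Since every vertex cover contains a minimal one, deleting vertices from $V$ while the covering property persists yields a minimal vertex cover $C$; because $V$ is not minimal, $C\subsetneq V$, hence $|C|<|V|=n$. By Corollary \ref{C-minimal-strong}, this minimal cover $C$ is strong.

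Finally, $V$ and $C$ are two strong vertex covers with $|C|<|V|$, so condition $2)$ of Theorem \ref{unmixed} fails; therefore $I(D)$ is not unmixed, that is, $I(D)$ is mixed. The only delicate point, and the step I would treat with care, is the strict inequality $|C|<|V|$: it rests on $V$ genuinely failing to be minimal, which is guaranteed by $L_{3}(V)=V\neq\emptyset$ together with $V\neq\emptyset$. Everything else is a direct appeal to the results already established above.
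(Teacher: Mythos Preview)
Your proof is correct and follows essentially the same approach as the paper: both arguments observe that $L_{3}(V)=V\neq\emptyset$, so $V$ is a strong vertex cover that is not minimal, and then invoke Theorem~\ref{unmixed} to conclude $I(D)$ is mixed. The only cosmetic difference is that the paper appeals directly to condition~3) of Theorem~\ref{unmixed} (a strong cover with nonempty $L_{3}$ already violates it), whereas you go the extra step of extracting a minimal cover $C\subsetneq V$ to violate condition~2); both routes are immediate once $L_{3}(V)\neq\emptyset$ is established.
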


\proof
By Proposition \ref{remark-L3} $V(D)$ is not minimal, since $L_{3}(V(D))=V(D)$. Therefore, by Theorem \ref{unmixed}, $I(D)$ is mixed.
\qed

\begin{rem}\label{remark-mixed}
If $V=V^{+}$, then $I(D)$ is mixed.
\end{rem}
\proof
If $x_{i}\in V$, then by Remark \ref{rem3.3} $N^{-}_{D}(x_{i})\neq \emptyset$, since $V=V^{+}$. Thus, there is $x_{j}\in V$ such that $(x_j,x_i)\in E(D)$. Also, $w(x_j)\neq 1$ and $x_{j}\in V=L_{3}(V)$. So, $V$ is a strong vertex cover. Hence, by Proposition \ref{prop-mixed}, $I(D)$ is mixed. 
\qed

\vspace{1ex}
In the following three results we assume that $D_{1},\ldots, D_{r}$ are the connected components of $D$. Furthermore $G_{i}$ is the underlying graph of $D_{i}$.

\begin{lem}\label{L3-ci}
Let $C$ be a vertex cover of $D$, then  
$L_{1}(C)=\bigcup_{i=1}^{r}L_{1}(C_{i})$ and $L_{3}(C)=\bigcup_{i=1}^{r}L_{3}(C_{i})$, where $C_{i}=C\cap V(D_{i})$.
\end{lem}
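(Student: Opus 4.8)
The plan is to exploit the fact that in a disconnected graph every neighbourhood of a vertex is confined to its own connected component; this makes each of the conditions defining $L_1$ and $L_3$ purely local, so the asserted decompositions follow immediately from the fact that the sets $V(D_i)$ partition $V$.

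First I would record the elementary observation that if $x\in V(D_i)$, then $N_D^{+}(x)=N_{D_i}^{+}(x)$, $N_D^{-}(x)=N_{D_i}^{-}(x)$ and $N_D(x)=N_{D_i}(x)$, all of which are subsets of $V(D_i)$, since no edge of $D$ joins two distinct components. I would also note that each $C_i=C\cap V(D_i)$ is a vertex cover of $D_i$, and that because the $V(D_i)$ partition $V$ we have $C^{c}\cap V(D_i)=V(D_i)\setminus C_i=C_i^{c}$, where the last complement is taken inside $D_i$.

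For the $L_1$ identity, I would fix $x\in C$ and let $i$ be the index with $x\in V(D_i)$, so that $x\in C_i$. Then $N_D^{+}(x)\cap C^{c}=N_{D_i}^{+}(x)\cap C^{c}=N_{D_i}^{+}(x)\cap C_i^{c}$, the final equality holding because $N_{D_i}^{+}(x)\subseteq V(D_i)$. Hence $x\in L_1(C)$ if and only if $x\in L_1(C_i)$, and since $C=\bigcup_i C_i$ is a disjoint union, taking the union over $i$ yields $L_1(C)=\bigcup_{i=1}^{r}L_1(C_i)$. For the $L_3$ identity I would invoke Proposition~\ref{remark-L3}, which rewrites $L_3(C)=\{x\in C\mid N_D(x)\subset C\}$ and likewise $L_3(C_i)=\{x\in C_i\mid N_{D_i}(x)\subset C_i\}$. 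For $x\in C_i$ the inclusion $N_D(x)\subset C$ is equivalent to $N_{D_i}(x)\subset C\cap V(D_i)=C_i$, once more because $N_{D_i}(x)\subseteq V(D_i)$; thus $x\in L_3(C)$ if and only if $x\in L_3(C_i)$, and the union over $i$ gives $L_3(C)=\bigcup_{i=1}^{r}L_3(C_i)$.

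There is no genuine obstacle here: the only point requiring care is to confirm that the complement localises correctly, i.e.\ that intersecting a subset of $V(D_i)$ with $C^{c}$ is the same as intersecting it with $C_i^{c}$, which is immediate from the partition of $V$ into the component vertex sets. The analogous statement for $L_2$ then follows for free, since $C=L_1(C)\cup L_2(C)\cup L_3(C)$ is a disjoint decomposition compatible with the components.
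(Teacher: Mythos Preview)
Your proposal is correct and follows essentially the same approach as the paper: both arguments reduce to the observation that for $x\in V(D_j)$ one has $N_D(x)=N_{D_j}(x)$ and $N_D^{+}(x)=N_{D_j}^{+}(x)$, so the defining conditions for $L_1$ and $L_3$ localise to the component containing $x$. You are in fact slightly more careful than the paper, making explicit that $C_i$ is a vertex cover of $D_i$ and that $C^{c}\cap V(D_i)=C_i^{c}$, points the paper leaves implicit.
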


\proof
We take $x\in C$, then $x\in C_{j}$ for some $1\leq j\leq r$. Thus, $N_{D}(x)=N_{D_{j}}(x)$. In particular $N_{D}^{+}(x)=N_{D_{j}}^{+}(x)$, so $C\cap N_{D}^{+}(x)=C_{j}\cap N_{D_{j}}^{+}(x)$. Hence, $L_{1}(C)=\bigcup_{i=1}^{r}L_{1}(C_{i})$. On the other hand, 
\begin{center}
$x\in L_{3}(C) \Leftrightarrow N_{D}(x)\subseteq C\Leftrightarrow N_{D_{j}}(x)\subseteq C_{j} \Leftrightarrow x\in L_{3}(C_{j})$.
\end{center}
Therefore, $L_{3}(C)=\bigcup_{i=1}^{r}L_{3}(C_{i})$.
\qed

\begin{lem}\label{ci-strong}
Let $C$ be a vertex cover of $D$, then $C$ is strong if and only if each $C_{i}=C\cap V(D_{i})$ is strong with $i\in\{1,\ldots, r\}$.
\end{lem}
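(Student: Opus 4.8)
The plan is to reduce everything to the decomposition of the sets $L_1$, $L_2$, $L_3$ across connected components, which is already supplied by Lemma \ref{L3-ci}, and then to exploit the fact that edges never cross between distinct components.

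First I would record the preliminary observation that each $C_i = C\cap V(D_i)$ is a vertex cover of $D_i$: any edge $(x,y)\in E(D_i)\subseteq E(D)$ has both endpoints in $V(D_i)$, so whichever endpoint $C$ covers already lies in $C_i$. Next, from Lemma \ref{L3-ci} I have $L_1(C)=\bigcup_i L_1(C_i)$ and $L_3(C)=\bigcup_i L_3(C_i)$; since $C=\bigsqcup_i C_i$ is a disjoint union and $C_i=L_1(C_i)\sqcup L_2(C_i)\sqcup L_3(C_i)$, the same disjointness forces $L_2(C)=\bigcup_i L_2(C_i)$ as well. I would also note that weights are unchanged by passing to a component, so for $y\in V(D_i)$ the condition $y\in V^+$ is equivalent to $y\in V^+(D_i)$.

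With these in hand, both implications are short. For the forward direction, given $x\in L_3(C_i)$, Lemma \ref{L3-ci} places $x\in L_3(C)$, and strongness of $C$ (Definition \ref{strong-cover-defn}) yields an edge $(y,x)\in E(D)$ with $y\in(L_2(C)\cup L_3(C))\cap V^+$. The crucial point is that $x\in V(D_i)$ and $D_i$ is a connected component, so the edge $(y,x)$ cannot leave $D_i$; hence $y\in V(D_i)$, and the component-decompositions of $L_2$ and $L_3$ give $y\in(L_2(C_i)\cup L_3(C_i))\cap V^+(D_i)$, which is exactly what strongness of $C_i$ requires. The converse is symmetric: given $x\in L_3(C)$, I would locate the component $D_j$ containing $x$, observe $x\in L_3(C_j)$, apply strongness of $C_j$ to obtain $(y,x)\in E(D_j)\subseteq E(D)$ with $y\in(L_2(C_j)\cup L_3(C_j))\cap V^+(D_j)\subseteq(L_2(C)\cup L_3(C))\cap V^+$, and conclude that $C$ is strong.

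The only step requiring genuine care, though it is conceptually routine, is the passage from ``$y$ is an in-neighbour of $x$ in $D$'' to ``$y$ lies in the same component as $x$''. This is precisely where connectedness of the $D_i$ is used, and it is what makes the in-neighbour supplied by the definition of a strong cover respect the component structure. Everything else follows formally from Lemma \ref{L3-ci}, so I expect no obstacle beyond the bookkeeping of keeping track of which component each vertex belongs to.
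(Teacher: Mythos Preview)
Your proposal is correct and follows essentially the same route as the paper: both directions use Lemma~\ref{L3-ci} to pass between $L_3(C)$ and $L_3(C_i)$, invoke the definition of a strong cover to produce the witnessing in-neighbour, and then use that edges stay within a connected component to transfer that witness. The only cosmetic difference is that the paper phrases the condition on the witness as $y\in C\setminus L_1(C)$ (using the $L_1$-decomposition from Lemma~\ref{L3-ci} directly) rather than deducing the $L_2$-decomposition first as you do, but this is equivalent bookkeeping.
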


\proof
$\Rightarrow)$ We take $x\in L_{3}(C_{j})$. By Lemma \ref{L3-ci}, $x\in L_{3}(C)$ and there is $z\in N_{D}^{-}(x)\cap V^{+}$ with $z\in C\setminus L_{1}(C)$, since $C$ is strong. So, $z\in N_{D_{j}}^{-}(x)$ and $z\in V(D_{j})$, since $x\in D_{j}$. Consequently, by Lemma \ref{L3-ci}, $z\in C_{j}\setminus L_{1}(C_{j})$. Therefore $C_{j}$ is strong.

\vspace{1ex}
 \noindent
 $\Leftarrow)$ We take $x\in L_{3}(C)$, then $x\in C_{i}$ for some $1\leq i\leq r$. Then, by Lemma \ref{L3-ci}, $x\in L_{3}(C_{i})$. Thus, there is $a\in N_{D_{i}}^{-}(x)$ such that $w(a)\neq 1$ and $a\in C_{i}\setminus L_{1}(C_{i})$, since $C_{i}$ is strong. Hence, by Lemma \ref{L3-ci}, $a\in C\setminus L_{1}(C)$. Therefore $C$ is strong.
 
\qed

\begin{cor}
$I(D)$ is unmixed if and only if $I(D_{i})$ is unmixed for each $1\leq i\leq r$. 
\end{cor}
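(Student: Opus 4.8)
The plan is to reduce the statement to a purely combinatorial claim about cardinalities of strong vertex covers, using Theorem~\ref{unmixed}, and then to exploit the fact that the connected components $D_1,\ldots,D_r$ are vertex-disjoint together with Lemma~\ref{ci-strong} to decompose strong vertex covers componentwise. By Theorem~\ref{unmixed}, $I(D)$ is unmixed if and only if all strong vertex covers of $D$ have the same cardinality, and likewise $I(D_i)$ is unmixed if and only if all strong vertex covers of $D_i$ have the same cardinality; so it suffices to prove the corresponding equivalence for strong vertex covers. The central observation I would record first is that, since $V(D)=\bigsqcup_{i=1}^{r}V(D_i)$ and there are no edges between distinct components, a subset $C\subseteq V$ is a strong vertex cover of $D$ precisely when each $C_i=C\cap V(D_i)$ is a strong vertex cover of $D_i$ (this is exactly Lemma~\ref{ci-strong}), and in that case $C=\bigsqcup_i C_i$ with $|C|=\sum_{i=1}^{r}|C_i|$.

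For the backward direction, I would assume each $I(D_i)$ is unmixed and let $n_i$ denote the common cardinality of all strong vertex covers of $D_i$. Given any strong vertex cover $C$ of $D$, Lemma~\ref{ci-strong} gives that each $C_i$ is strong in $D_i$, hence $|C_i|=n_i$, and therefore $|C|=\sum_{i=1}^{r}n_i$ is independent of $C$. By Theorem~\ref{unmixed}, $I(D)$ is unmixed.

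For the forward direction, I would assume $I(D)$ is unmixed and fix an index $i$. To compare two arbitrary strong vertex covers $C_i$ and $C_i'$ of $D_i$, I would lift them to strong vertex covers of $D$ by choosing, for each component $D_k$ with $k\neq i$, a fixed minimal vertex cover $B_k$ of $D_k$, which is strong by Corollary~\ref{C-minimal-strong}. Then $C=C_i\cup\bigcup_{k\neq i}B_k$ and $C'=C_i'\cup\bigcup_{k\neq i}B_k$ are strong vertex covers of $D$ by Lemma~\ref{ci-strong}, so unmixedness of $I(D)$ forces $|C|=|C'|$, whence $|C_i|=|C_i'|$. Thus all strong vertex covers of $D_i$ share the same cardinality and $I(D_i)$ is unmixed by Theorem~\ref{unmixed}.

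The only point requiring care, and the nearest thing to an obstacle, is guaranteeing in the forward direction that each component $D_k$ actually admits a strong vertex cover to use as the fixed lift $B_k$; this is handled cleanly because every graph has a minimal vertex cover and minimal covers are strong by Corollary~\ref{C-minimal-strong}, so the lift is always available. Beyond that, the argument is a direct application of Lemma~\ref{ci-strong} and Theorem~\ref{unmixed}, and no further estimates are needed.
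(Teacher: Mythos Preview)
Your proof is correct, but both directions take a different route from the paper. For the forward implication, the paper simply observes that each $D_i$ is a c-minor of $D$ and invokes Theorem~\ref{c-minor-umixed} (unmixedness is closed under c-minors), whereas you give a direct lifting argument, extending a strong cover of $D_i$ to one of $D$ by attaching fixed minimal covers on the other components via Corollary~\ref{C-minimal-strong} and Lemma~\ref{ci-strong}. For the backward implication, the paper uses condition~3) of Theorem~\ref{unmixed} (that $G$ is unmixed and every strong cover has $L_3=\emptyset$) together with Lemma~\ref{L3-ci} to decompose $L_3(C)$ componentwise and conclude it is empty; you instead work with condition~2) (equal cardinalities) and sum $|C|=\sum_i |C_i|$. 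Your approach is more self-contained and avoids the c-minor machinery altogether; the paper's approach is shorter on the forward side because the work is already packaged in Theorem~\ref{c-minor-umixed}, at the cost of depending on that heavier result.
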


\proof
 $\Rightarrow)$ By Theorem \ref{c-minor-umixed}, since $D_{i}$ is a c-minor of $D$.
  
 \vspace{1ex}
 \noindent
 $\Leftarrow)$ By Theorem \ref{unmixed}, $G_{i}$ is unmixed thus $G$ is unmixed. Now, if $C$ is a strong vertex cover, then by Lemma \ref{L3-ci}, $C_{i}=C\cap V(D_{i})$ is a strong vertex cover. Consequently, $L_{3}(C_{i})=\emptyset$, since $I(D_{i})$ is unmixed. Hence, by Lemma \ref{L3-ci}, $L_{3}(C)=\bigcup_{i=1}^{r}L_{3}(C_{i})=\emptyset$. Therefore, by Theorem \ref{unmixed}, $I(D)$ is unmixed.
\qed

\begin{defn}
Let $G$ be a simple graph whose vertex set is $V(G)=\{x_1,\ldots, x_{n}\}$ and edge set $E(G)$. A {\it whisker\/} of $G$ is a graph $H$ whose vertex set is $V(H)=V(G)\cup \{y_{1},\ldots, y_{n}\}$ and whose edge set is $E(H)=E(G)\cup \{\{x_1,y_1\},\ldots, \{x_{n},y_{n}\}\}$.
\end{defn}

\begin{defn}
Let $D$ and $H$ be weighted oriented graphs. $H$ is a {\it weighted oriented whisker\/} of $D$ if $D\subseteq H$ and the underlying graph of $H$ is a whisker of the underlying graph of $D$.
\end{defn}

\begin{thm}\label{unmixed-whiskers}
Let $H$ a weighted oriented whisker of $D$, where $V(D)=\{x_{1},\ldots, x_{n}\}$ and $V(H)=V(D)\cup \{y_{1},\ldots, y_{n}\}$, then the following conditions are equivalents:
\begin{enumerate}[1)]
\item $I(H)$ is unmixed.
\item If $(x_{i},y_{i})\in E(H)$ for some $1\leq i\leq n$, then $w(x_{i})=1$. 
\end{enumerate}
\end{thm}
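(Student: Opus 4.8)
The plan is to combine the characterization of unmixedness in Theorem \ref{unmixed} with the special structure of the whisker. The first observation is that the underlying graph $G_H$ of $H$ is always unmixed: every minimal vertex cover of $G_H$ must contain exactly one endpoint of each leaf edge $\{x_j,y_j\}$, since $y_j$ has degree one and keeping both $x_j$ and $y_j$ would make $y_j$ redundant, contradicting minimality. Hence every minimal vertex cover of $G_H$ has cardinality exactly $n$, so $G_H$ is unmixed. Consequently, by Theorem \ref{unmixed}, $I(H)$ is unmixed if and only if $L_3(C)=\emptyset$ for every strong vertex cover $C$ of $H$; that is, the theorem reduces to showing that $H$ has this property if and only if condition $2)$ holds.

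For $2)\Rightarrow 1)$ I would show that no vertex lies in $L_3(C)$ for a strong cover $C$. A whisker vertex $y_i$ can never lie in $L_3(C)$ of a strong cover: its only neighbour is $x_i$, so the only possible in-edge is $(x_i,y_i)$, and Definition \ref{strong-cover-defn} would then demand a witness of weight $\neq 1$, forcing $w(x_i)\neq 1$ and contradicting $2)$; if instead $(y_i,x_i)\in E(H)$, then $y_i$ has no in-edge at all and again cannot be witnessed. A base vertex $x_i$ cannot lie in $L_3(C)$ either: by Proposition \ref{remark-L3}, $x_i\in L_3(C)$ forces $y_i\in N_H(x_i)\subseteq C$, whence $N_H(y_i)=\{x_i\}\subseteq C$ and so $y_i\in L_3(C)$, which we have just excluded. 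Thus $L_3(C)=\emptyset$ and $I(H)$ is unmixed.

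For $1)\Rightarrow 2)$ I would prove the contrapositive by exhibiting a strong vertex cover of cardinality $n+1$; together with the size-$n$ minimal covers above (which are strong by Corollary \ref{C-minimal-strong}), Theorem \ref{unmixed} then forces $I(H)$ to be mixed. Assume $(x_i,y_i)\in E(H)$ with $w(x_i)\neq 1$. By the source convention of Remark \ref{rem3.3}, $x_i$ is not a source, and since its whisker edge points out of $x_i$, it must have an in-neighbour among its $G$-neighbours: there is $x_k$ with $(x_k,x_i)\in E(D)$ and $k\neq i$. I would then set $C=(V(D)\setminus\{x_k\})\cup\{y_i,y_k\}$. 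A direct check shows $C$ is a vertex cover of $H$ of size $n+1$ with $L_3(C)=\{y_i\}$: the vertex $y_i$ is the only one all of whose neighbours lie in $C$, because $x_i$ has the neighbour $x_k\notin C$, each other $x_j$ has its whisker neighbour $y_j\notin C$, and $y_k$ has neighbour $x_k\notin C$. Finally $C$ is strong, since the only vertex to witness is $y_i$, and $x_i$ serves as its witness: $(x_i,y_i)\in E(H)$, $w(x_i)\neq 1$, and $x_i\in L_2(C)$ because all its out-neighbours lie in $C$ while its in-neighbour $x_k$ does not.

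The two $L_3$ computations are routine; the main obstacle is the direction $1)\Rightarrow 2)$, specifically verifying that the enlarged set is genuinely a strong cover. This hinges on two points: using the source convention (Remark \ref{rem3.3}) to secure a true $G$-in-neighbour $x_k$ of $x_i$, without which $x_i$ itself would land in $L_3(C)$ with no available witness and the construction would collapse; and checking that deleting $x_k$ keeps $x_i$ out of $L_1(C)$, so that $x_i\in L_2(C)$ can act as the required weight-$\neq 1$ witness for $y_i$.
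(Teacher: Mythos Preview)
Your proposal is correct and follows essentially the same route as the paper: for $2)\Rightarrow 1)$ you argue (as the paper does) that no whisker vertex $y_j$ can lie in $L_3$ of a strong cover, which forces each strong cover to be minimal; for $1)\Rightarrow 2)$ you build exactly the paper's cover $C=(V(D)\setminus\{x_k\})\cup\{y_k,y_i\}$, using Remark~\ref{rem3.3} to secure the in-neighbour $x_k$, and verify $L_3(C)=\{y_i\}$ with $x_i\in L_2(C)$ as the witness. The only cosmetic difference is that you invoke the equivalence $1)\Leftrightarrow 3)$ of Theorem~\ref{unmixed} (after first noting $G_H$ is unmixed), whereas the paper phrases both directions through $1)\Leftrightarrow 2)$.
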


\proof
$2) \Rightarrow 1)$ We take a strong vertex cover $C$ of $H$. Suppose $x_{j}, y_{j}\in C$, then $y_{j}\in L_{3}(C)$, since $N_{D}(y_{j})=\{x_{j}\}\subseteq C$. Consequently, $(x_{j},y_{j})\in E(G)$ and $w(x_{j})\neq 1$, since $C$ is strong. This is a contradiction by condition $2)$. This implies, $|C\cap \{x_{i},y_{i}\}|=1$ for each $1\leq i\leq n$. So, $|C|=n$. Therefore, by Theorem \ref{unmixed}, $I(H)$ is unmixed.

\vspace{1ex}
\noindent
$1) \Rightarrow 2)$ By contradiction suppose $(x_{i},y_{i})\in E(H)$ and $w(x_{i})\neq 1$ for some $i$. Since $w(x_{i})\neq 1$ and by Remark \ref{rem3.3}, we have that $x_{i}$ is not a source. Thus, there is $x_{j}\in V(D)$, such that $(x_{j},x_{i})\in E(H)$. We take the vertex cover $C=\{V(D)\setminus x_{j}\}\cup \{y_{j},y_{i}\}$, then by Proposition \ref{remark-L3}, $L_{3}(C)=\{y_{i}\}$. Furthermore $N_{D}(x_{i})\setminus C=\{x_{j}\}$ and $(x_{j},x_{i})\in E(H)$, then $x_{i}\in L_{2}(C)$. Hence C is strong, since $L_{3}(C)=\{y_{i}\}$, $(x_{i},y_{i})\in E(G)$ and $w(x_{i})\neq 1$. A contradiction by Theorem \ref{unmixed}, since $I(H)$ is unmixed.
\qed

\begin{thm}\label{unmixed-bipartite}
Let $D$ be a bipartite weighted oriented graph, then $I(D)$ is unmixed if and only if 
\begin{enumerate}[1)]
\item $G$ has a perfect matching $\{\{x_{1}^{1},x_{1}^{2}\},\ldots,\{x_{s}^{1},x_{s}^{2}\}\}$  where $\{x_{1}^{1},\ldots, x_{s}^{1}\}$ and $\{x_{1}^{2},\ldots, x_{s}^{2}\}$ are stable sets. Furthermore if $\{x_{j}^{1},x_{i}^{2}\}$, $\{x_{i}^{1},x_{k}^{2}\}\in E(G)$ then $\{x_{j}^{1},x_{k}^{2}\}\in E(G)$.
\item If $w(x_{j}^{k})\neq 1$ and $N_{D}^{+}(x_{j}^{k})=\{x_{i_{1}}^{k^\prime},\ldots, x_{i_{r}}^{k^\prime}\}$ where $\{k,k^\prime\}=\{1,2\}$, then $N_{D}(x_{i_{\ell}}^{k})\subseteq N_{D}^{+}(x_{j}^{k})$ and $N_{D}^{-}(x_{i_{\ell}}^{k})\cap V^{+}=\emptyset$ for each $1\leq \ell \leq r$. \end{enumerate}
\end{thm}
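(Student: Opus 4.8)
The plan is to reduce everything to Theorem~\ref{unmixed}, which says that $I(D)$ is unmixed if and only if $I(G)$ is unmixed and $D$ has the minimal-strong property (equivalently, by Remark~\ref{star-L3}, $L_{3}(C)=\emptyset$ for every strong vertex cover $C$). Condition $1)$ is exactly the classical combinatorial description of when a bipartite graph $G$ is unmixed (the perfect matching together with the transitivity requirement; see \cite{monalg-2}), so $I(G)$ is unmixed if and only if $1)$ holds. Hence, working under the standing assumption that $1)$ holds, the theorem reduces to proving that condition $2)$ is equivalent to the minimal-strong property of $D$. I fix the bipartition from $1)$, with matched pairs $\{x_i^1,x_i^2\}$, and by symmetry treat the two sides uniformly through the index convention $\{k,k'\}=\{1,2\}$.

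For $2)\Rightarrow$ minimal-strong property I would argue by contradiction. Suppose $C$ is a strong vertex cover with $L_3(C)\neq\emptyset$ and pick $x\in L_3(C)$. By Remark~\ref{strong-cover} there is $(y,x)\in E(D)$ with $y\in C\setminus L_1(C)$ and $w(y)\neq 1$; writing $y=x_j^k$ we have $x\in N_D^+(x_j^k)$, so $x=x_{i_\ell}^{k'}$ for some $\ell$, and since $y\notin L_1(C)$ we get $N_D^+(x_j^k)\subseteq C$. Because $x=x_{i_\ell}^{k'}\in L_3(C)$, Proposition~\ref{remark-L3} gives $N_D(x_{i_\ell}^{k'})\subseteq C$, and in particular its matching partner $x_{i_\ell}^{k}$ lies in $C$. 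Now condition $2)$ applied to $x_j^k$ yields $N_D(x_{i_\ell}^{k})\subseteq N_D^+(x_j^k)\subseteq C$, so $x_{i_\ell}^{k}\in L_3(C)$ by Proposition~\ref{remark-L3}; but condition $2)$ also gives $N_D^-(x_{i_\ell}^{k})\cap V^+=\emptyset$, which makes it impossible, by Remark~\ref{strong-cover}, for the $L_3$-vertex $x_{i_\ell}^{k}$ to possess the nonunit-weight in-neighbour required by strongness. This contradiction forces $L_3(C)=\emptyset$ for every strong $C$.

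For the converse I would prove the contrapositive: if condition $2)$ fails, I construct a strong vertex cover $C$ with $L_3(C)\neq\emptyset$. The failure produces a vertex $x_j^k$ with $w(x_j^k)\neq 1$, an out-neighbour $x_{i_\ell}^{k'}\in N_D^+(x_j^k)$, and one of two defects: either (i) $N_D(x_{i_\ell}^{k})\not\subseteq N_D^+(x_j^k)$, or (ii) $N_D^-(x_{i_\ell}^{k})\cap V^+\neq\emptyset$. In either case the target is a cover in which $x_{i_\ell}^{k'}\in L_3(C)$ with witness $x_j^k$; a natural first candidate is $C_0=\{x_1^k,\dots,x_s^k\}\cup N_D^+(x_j^k)$, which is a vertex cover (one full side covers every edge of the bipartite graph), places $N_D(x_{i_\ell}^{k'})\subseteq C_0$ and $N_D^+(x_j^k)\subseteq C_0$, and therefore already gives $x_{i_\ell}^{k'}\in L_3(C_0)$ witnessed by $x_j^k\in C_0\setminus L_1(C_0)$. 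In defect (i) the side-$k'$ neighbour of $x_{i_\ell}^{k}$ lying outside $N_D^+(x_j^k)$ automatically stays out of $C_0$, so $x_{i_\ell}^{k}\notin L_3(C_0)$; in defect (ii) one instead routes the matching partner $x_{i_\ell}^{k}$ through the nonunit-weight element of $N_D^-(x_{i_\ell}^{k})\cap V^+$, which may require enlarging $C_0$ so that this in-neighbour lands in $C\setminus L_1(C)$.

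The main obstacle is verifying that the final cover is genuinely strong. Taking a whole side of the bipartition can drag additional vertices into $L_3$ --- precisely those $x_m^k$ with $N_D(x_m^k)\subseteq N_D^+(x_j^k)$ --- and each must, by Remark~\ref{strong-cover}, be supplied with a nonunit-weight in-neighbour in $C\setminus L_1(C)$; likewise, in defect (ii) the intended witness for $x_{i_\ell}^{k}$ sits on side $k'$ and need not belong to $N_D^+(x_j^k)$, so $C_0$ generally has to be modified. This is where I expect to lean hardest on condition $1)$: the transitivity property governs exactly which neighbourhoods can be nested inside $N_D^+(x_j^k)$, and the perfect matching pairs every forced $L_3$-vertex with a candidate witness. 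The technical heart of the proof is thus the bookkeeping that makes $C$ large enough to keep $x_{i_\ell}^{k'}$ in $L_3(C)$ and to place every needed witness in $C\setminus L_1(C)$, yet controlled enough that no un-witnessed $L_3$-vertex survives; the two directions recorded above, by contrast, follow quickly from Theorem~\ref{unmixed}, Proposition~\ref{remark-L3} and Remark~\ref{strong-cover}.
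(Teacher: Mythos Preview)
Your overall strategy and the direction $2)\Rightarrow$ minimal-strong are correct and essentially match the paper's argument. The gap is in the converse. Your candidate $C_0=\{x_1^k,\dots,x_s^k\}\cup N_D^+(x_j^k)$ takes the whole $k$-side, and as you yourself note this forces every $x_m^k$ with $N_D(x_m^k)\subseteq N_D^+(x_j^k)$ into $L_3(C_0)$; nothing in condition $1)$ will manufacture nonunit-weight in-neighbours for these vertices, so you cannot in general make $C_0$ strong, and the ``bookkeeping'' you defer is the whole difficulty.

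The paper resolves this not by enlarging $C_0$ but by \emph{shrinking} it: set
\[
C=N_D^+(x_j^k)\cup\{x_i^k\mid N_D(x_i^k)\not\subseteq N_D^+(x_j^k)\}.
\]
Removing exactly the problematic $x_m^k$ keeps $C$ a vertex cover (the removed vertices have all neighbours already in $N_D^+(x_j^k)$) and forces $L_3(C)\subseteq N_D^+(x_j^k)$, so every $L_3$-vertex is witnessed by the single vertex $x_j^k\in L_2(C)\cap V^+$. Thus $C$ is strong unconditionally; unmixedness then gives $|C|=s$, whence $x_{i_\ell}^k\notin C$, i.e.\ $N_D(x_{i_\ell}^k)\subseteq N_D^+(x_j^k)$, the first half of $2)$. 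For the second half, once this is known the paper builds a second cover
\[
C'=N_D^+(x_j^k)\cup\{x_i^k\mid i\notin\{i_1,\dots,i_r\}\}\cup N_D^+(x_{i_{\ell'}}^{k'}),
\]
where $x_{i_{\ell'}}^{k'}\in N_D^-(x_{i_\ell}^k)\cap V^+$ is the hypothetical violator; now the two heavy vertices $x_j^k$ and $x_{i_{\ell'}}^{k'}$ jointly witness all of $L_3(C')$, $C'$ is again strong, and $|C'|=s$ contradicts $x_{i_\ell}^k,x_{i_\ell}^{k'}\in C'$. The missing idea in your sketch is precisely this ``remove, don't add'' construction of $C$, which makes strongness automatic rather than something to be repaired.
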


\proof
$\Leftarrow)$ By $1)$ and \cite[Theorem 2.5.7]{gitler-Vill}, $G$ is unmixed. We take a strong vertex cover $C$ of $D$. Suppose $L_{3}(C)\neq \emptyset$, thus there exist $x_{i}^{k}\in L_{3}(C)$. Since $C$ is strong, there is $x_{j}^{k^{\prime}}\in V^{+}$ such that $(x_{j}^{k^\prime},x_{i}^{k})\in E(D)$, $x_{j}^{k^\prime}\in C\setminus L_{1}(C)$ and $\{k, k^\prime\}=\{1,2\}$. Furthermore $N_{D}^{+}(x_{j}^{k^\prime})\subseteq C$, since $x_{j}^{k^{\prime}}\notin L_{1}(C)$. Consequently, by $3)$, $N_{D}(x_{i}^{k^\prime})\subseteq N_{D}^{+}(x_{j}^{k^\prime})\subseteq C$ and $N_{D}^{-}(x_{i}^{k^{\prime}})\cap V^{+}=\emptyset$. A contradiction, since $x_{i}^{k^{\prime}}\in L_{3}(C)$ and $C$ is strong. Hence $L_{3}(C)=\emptyset$ and $D$ has the strong-minimal property. Therefore $I(D)$ is unmixed, by Theorem \ref{unmixed}.

\vspace{1ex}
\noindent
$\Rightarrow)$ By Theorem \ref{unmixed}, $G$ is unmixed. Hence, by \cite[Theorem 2.5.7]{gitler-Vill}, $G$ satisfies $1)$.

\vspace{1ex}
\noindent
If $w(x_{j}^{k})\neq 1$, then we take $C=N_{D}^{+}(x_{j}^{k})\cup\{x_{i}^{k}\mid N_{D}(x_{i}^{k})\not\subseteq N_{D}^{+}(x_{j}^{k})\}$ and $k^{\prime}$ such that $\{k,k^{\prime}\}=\{1,2\}$. If $\{x_{i}^{k},x_{i^{\prime}}^{k^{\prime}}\}\in E(G)$ and $x_{i}^{k}\notin C$, then $x_{i^{\prime}}^{k^{\prime}}\in N_{D}(x_{i}^{k})\subseteq N_{D}^{+}(x_{j}^{k})\subseteq C$. This implies, $C$ is a vertex cover of $D$. Now, if $x_{i_{1}}^{k}\in L_{3}(C)$, then $N_{D}(x_{i_{1}}^{k})\subseteq C$. Consequently  $N_{D}(x_{i_{1}}^{k})\subseteq N_{D}^{+}(x_{j}^{k})$ implies $x_{i_{1}}^{k}\notin C$. A contradiction, then $L_{3}(C)\subseteq N_{D}^{+}(x_{j}^{k})$. Also, $N_{G}^{-}(x_{j}^{k})\neq \emptyset$, since $w(x_{j}^{k})\neq 1$. Thus $x_{j}^{k}\in L_{2}(C)$, since $N_{G}^{-}(x_{j}^{k})\cap C=\emptyset$. Hence $C$ is strong, since $L_{3}(C)\subseteq N_{D}^{+}(x_{j}^{k})$ and $x_{j}^{k}\in V^{+}$. Furthermore $\{x_{1}^{\prime},\ldots, x_{s}^{\prime}\}$ is a minimal vertex cover, then by Theorem \ref{unmixed} $|C|=s$, since $D$ is unmixed. We assume $N_{D}^{+}(x_{j}^{k})=\{x_{i_{1}}^{k^{\prime}},\ldots x_{i_{r}}^{k^{\prime}}\}$. Since $C$ is minimal, $x_{i_{\ell}}^{k}\notin C$ for each $1\leq \ell \leq r$. So, $N_{D}(x_{i_{\ell}}^{k})\subseteq N_{D}^{+}(x_{j}^{k})$. Now, suppose $z\in N_{D}^{-}(x_{i_{\ell}}^{k})\cap V^{+}$, then $z=x_{i_{\ell^{\prime}}}^{k^{\prime}}$ for some $1\leq\ell^{\prime}\leq r$, since $N_{D}(x_{i_{\ell}}^{k})\subseteq N_{D}^{+}(x_{j}^{k})$. We take $C^\prime=N_{D}^{+}(x_{j}^{k})\cup\{x_{i}^{k}\mid i\notin\{i_{1},\ldots,i_{r}\}\}\cup N_{D}^{+}(x_{i_{\ell^{\prime}}}^{k^\prime})$. Since $N_{D}(x_{i_{u}}^{k})\subseteq N_{D}^{+}(x_{j}^{k})$ for each $1\leq u\leq r$, we have that $C^{\prime}$ is a vertex cover. If $\{x_{q}^{k},x_{q}^{k^\prime}\}\cap L_{3}(C)=\emptyset$, then $\{x_{q}^{k},x_{q}^{k^\prime}\}\subseteq C^\prime$, so $x_{q}^{k^{\prime}}\in N_{D}^{+}(x_{j}^{k})$ implies $q\in \{i_{1},\ldots, i_{r}\}$. Consequently, $x_{q}^{k}\in N_{D}^{+}(x_{i_{\ell^{\prime}}}^{k^{\prime}})$, since $x_{q}^{k}\in C^{\prime}$. This implies, $(x_{j}^{k},x_{q}^{k^\prime}), (x_{i_{\ell^{\prime}}}^{k^\prime}, x_{q}^{k})\in E(D)$. Moreover, $N_{D}^{+}(x_{i_{\ell^{\prime}}}^{k^\prime})\cup N_{D}^{+}(x_{j}^{k})\subseteq C^\prime$, then $x_{i_{\ell^{\prime}}}^{k^\prime}\notin L_{1}(C^\prime)$ and $x_{j}^{k}\notin L_{1}(C^{\prime})$. Thus, $C^\prime$ is strong, since $x_{j}^{k},x_{i_{\ell^{\prime}}}^{k^{\prime}}\in V^{+}$. Furthermore, by Theorem \ref{unmixed}, $|C^{\prime}|=s$. But $x_{i_{\ell}}^{k^{\prime}}\in N_{D}^{+}(x_{j}^{k})$ and $x_{i_{\ell}}^{k}\in N_{D}^{+}(x_{i_{\ell^{\prime}}}^{k^{\prime}})$, hence $x_{i_{\ell}}^{k^{\prime}}, x_{i_{\ell}}^{k}\in C^{\prime}$. A contradiction, so $N_{D}^{-}(x_{i_{\ell}}^{k})\cap V^{+}=\emptyset$. Therefore $D$ satisfies $2)$. 
\qed

\begin{lem}\label{lem1-cycle}
If the vertices of $V^{+}$ are sinks, then $D$ has the minimal-strong property.
\end{lem}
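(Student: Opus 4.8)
The plan is to reduce everything to the $L_3$-reformulation of the minimal-strong property supplied by Remark \ref{star-L3}: it suffices to show that $L_{3}(C)=\emptyset$ for every strong vertex cover $C$ of $D$. So I would fix an arbitrary strong vertex cover $C$ and argue by contradiction, assuming $L_{3}(C)\neq\emptyset$ and picking a vertex $x\in L_{3}(C)$.

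The next step is to feed $x$ into the strongness hypothesis via Remark \ref{strong-cover}: since $x\in L_{3}(C)$ (equivalently $N(x)\subseteq C$) and $C$ is strong, there must exist a vertex $y\in N_{D}^{-}(x)\cap\bigl(C\setminus L_{1}(C)\bigr)$ with $y\in V^{+}$. The key point is that $y\in V^{+}$ means $w(y)\neq 1$, so by the hypothesis of the lemma $y$ is a sink, i.e. $N_{D}(y)=N_{D}^{-}(y)$. Because $D$ is an oriented graph (no pair of opposite arcs), this forces $N_{D}^{+}(y)=\emptyset$; I would state this implication explicitly, as it is the one place where the word ``sink'' is converted into the usable statement ``$y$ has no out-neighbours.''

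The contradiction is then immediate: the membership $y\in N_{D}^{-}(x)$ means $(y,x)\in E(D)$, so $x\in N_{D}^{+}(y)$, contradicting $N_{D}^{+}(y)=\emptyset$. Hence no $x\in L_{3}(C)$ can exist, so $L_{3}(C)=\emptyset$; since $C$ was an arbitrary strong vertex cover, Remark \ref{star-L3} gives that $D$ has the minimal-strong property. There is no genuine obstacle here beyond being careful with the direction conventions for $N_{D}^{+}$ versus $N_{D}^{-}$; the only subtle point worth flagging is the passage from the formal definition of a sink ($N_{D}(y)=N_{D}^{-}(y)$) to $N_{D}^{+}(y)=\emptyset$, which relies on $D$ having no opposite arcs.
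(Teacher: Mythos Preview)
Your proposal is correct and follows essentially the same argument as the paper: take a strong vertex cover $C$, pick an element of $L_{3}(C)$, invoke strongness to obtain an in-neighbour in $V^{+}$, observe that this in-neighbour is by hypothesis a sink, and derive a contradiction from the existence of the outgoing edge. The paper's write-up is terser (it does not pause over the passage from ``sink'' to ``no out-neighbours''), but the logic is identical.
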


\proof
We take a strong vertex cover $C$ of $D$. Hence, if $y\in L_{3}(C)$, then there is $(z,y)\in E(D)$ with $z\in V^{+}$. Consequently, by hypothesis, $z$ is a sink. A contradiction, since $(z,y)\in E(D)$. Therefore, $L_{3}(C)=\emptyset$ and $C$ is a minimal vertex cover.   
\qed

\begin{lem}\label{lem2-cycle}
Let $D$ be a weighted oriented graph, where $G\simeq C_{n}$ with $n\geq 6$. Hence, $D$ has the minimal-strong property if and only if the vertices of $V^{+}$ are sinks. 
\end{lem}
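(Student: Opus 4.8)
The backward implication is immediate and requires nothing about the underlying graph: if every vertex of $V^{+}$ is a sink, then Lemma \ref{lem1-cycle} already gives that $D$ has the minimal-strong property. So the entire content of the statement is the forward implication, and I would prove it by contraposition. Assuming that some vertex of $V^{+}$ is \emph{not} a sink, I would exhibit a strong vertex cover $C$ of $D$ with $L_{3}(C)\neq\emptyset$; by Proposition \ref{minimalcover-L3} such a $C$ is not minimal, so by Remark \ref{star-L3} this witnesses that $D$ fails the minimal-strong property, which is exactly what contraposition needs.

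For the setup I would label the cycle $x_{1},\ldots ,x_{n}$ with edges $\{x_{i},x_{i+1}\}$ (indices mod $n$), pick $x_{1}\in V^{+}$ that is not a sink, and, after reflecting the cycle if necessary, assume $(x_{1},x_{2})\in E(D)$. A key first observation pins down the other edge at $x_{1}$: since $w(x_{1})\neq 1$, the convention of Remark \ref{rem3.3} forces $x_{1}$ not to be a source, so it has an in-neighbour, which can only be $x_{n}$; hence $(x_{n},x_{1})\in E(D)$ and $N_{D}^{+}(x_{1})=\{x_{2}\}$. The plan is then to build a cover whose unique $L_{3}$-vertex is $x_{2}$, with the strong condition at $x_{2}$ supplied by $x_{1}$. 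Concretely I would take $C=\{x_{1},x_{2},x_{3}\}\cup W$, where $W$ is a minimal vertex cover of the path $x_{4}-x_{5}-\cdots -x_{n}$ that omits both endpoints $x_{4}$ and $x_{n}$. I would then check that $x_{2}\in L_{3}(C)$ (as $N_{D}(x_{2})=\{x_{1},x_{3}\}\subseteq C$), that $x_{1},x_{3}\notin L_{3}(C)$ (because $x_{n},x_{4}\notin C$), and that no interior vertex $x_{5},\ldots ,x_{n-1}$ lies in $L_{3}(C)$, using that a minimal vertex cover of a path has no vertex both of whose neighbours lie in the cover (Proposition \ref{minimalcover-L3} applied to the path). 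This yields $L_{3}(C)=\{x_{2}\}$. Strongness then follows from Remark \ref{strong-cover}: the only vertex of $L_{3}(C)$ is $x_{2}$, it has the in-neighbour $x_{1}\in V^{+}$, and $x_{1}\notin L_{1}(C)$ since $N_{D}^{+}(x_{1})=\{x_{2}\}\subseteq C$, so $x_{1}\in\bigl(L_{2}(C)\cup L_{3}(C)\bigr)\cap V^{+}$.

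The main obstacle, and the precise place where the hypothesis $n\geq 6$ enters, is the existence of the cover $W$. Omitting both $x_{4}$ and $x_{n}$ is essential: it is exactly what keeps $x_{3}$ and $x_{1}$ out of $L_{3}(C)$ and $x_{1}$ out of $L_{1}(C)$. A minimal vertex cover of the path $x_{4}\cdots x_{n}$ avoiding both endpoints exists if and only if the path has at least three vertices, i.e. $n-3\geq 3$, that is $n\geq 6$; for $n\leq 5$ this fails and the construction breaks down, which is consistent with the small cycles being treated separately. The remaining work is routine: verifying that $C$ covers every edge of the cycle (the edge $\{x_{3},x_{4}\}$ is covered by $x_{3}$ and $\{x_{n},x_{1}\}$ by $x_{1}$), and the elementary lemma that a minimal vertex cover of a path contains no vertex with both neighbours in the cover (if it did, that vertex could be deleted, contradicting minimality). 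Neither step is difficult, so the crux of the proof is the orientation-pinning via the source convention together with the endpoint-avoiding path cover that consumes the hypothesis $n\geq 6$.
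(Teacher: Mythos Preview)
Your proof is correct and follows essentially the same route as the paper. The paper labels the non-sink vertex of $V^{+}$ as $x_{3}$ with $(x_{3},x_{2})\in E(D)$, uses Remark~\ref{rem3.3} to force $(x_{4},x_{3})\in E(D)$, and then writes the cover explicitly as $\{x_{1},x_{3},\ldots,x_{n-1}\}\cup\{x_{2}\}$ (even $n$) or $\{x_{1},x_{3},\ldots,x_{n-2}\}\cup\{x_{2},x_{n-1}\}$ (odd $n$); up to your relabelling and reflection this is exactly your $\{x_{1},x_{2},x_{3}\}\cup W$ with $W$ a minimal endpoint-avoiding cover of the residual path, and your identification of $L_{3}(C)=\{x_{2}\}$ and of the witness $x_{1}\in (C\setminus L_{1}(C))\cap V^{+}$ matches the paper's $x_{3}\in L_{2}(C)$. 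Your phrasing has the mild advantage of making the role of $n\geq 6$ transparent (it is precisely what guarantees the endpoint-avoiding path cover), whereas in the paper this is implicit in checking that $x_{4}\notin C$ and that only $x_{2}$ lands in $L_{3}$.
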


\proof
$\Leftarrow)$ By Lemma \ref{lem1-cycle}. 

\vspace{1ex}
\noindent
$\Rightarrow)$ By contradiction, suppose there is $(z,y)\in E(D)$, with $z\in V^{+}$. We can assume $G=(x_{1},x_{2},\ldots,x_{n},x_{1})\simeq C_{n}$, with $x_{2}=y$ and $x_{3}=z$. We take a strong vertex cover $C$ in the following form: $C=\{x_{1},x_{3},\ldots, x_{n-1}\}\cup \{x_{2}\}$ if $n$ is even or $C=\{x_{1},x_{3},\ldots, x_{n-2}\}\cup \{x_{2},x_{n-1}\}$ if $n$ is odd. Consequently, if $x\in C$ and $N_{D}(x)\subseteq C$, then $x=x_{2}$. Hence, $L_{3}(C)=\{x_{2}\}$. Furthermore $(x_{3},x_{2})\in E(D)$ with $x_{3}\in V^{+}$. Thus, $x_{3}$ is not a source, so, $(x_{4},x_{3})\in E(D)$. Then, $x_{3}\in L_{2}(C)$. This implies $C$ is a strong vertex cover. But $L_{3}(C)\neq \emptyset$. A contradiction, since $D$ has the minimal-strong property.
\qed

\begin{center}

\begin{figure}[h]
\begin{tikzpicture}[line width=.5pt,scale=0.75]
		\tikzstyle{every node}=[inner sep=0pt, minimum width=5.5pt] 

\tiny{
\node[draw, circle] (1) at (0,0){$x_{1}$};
\node[draw, circle] (2) at (2,0) {$x_{2}$};
\node[draw, circle] (5) at (0,-2) {$x_{5}$};
\node[draw, circle] (3) at (2,-2){$x_{3}$};
\node[draw, circle] (4) at (1,-3.5){$x_{4}$};

\draw[->] (2) to (1);
\draw[->] (5) to (1);
\draw[->] (4) to (5);
\draw[->] (4) to (3);
\draw[->] (3) to (2);

\node at (0,.4) {\tiny{$1$}};
\node at (2,.4) {\tiny{$1$}};
\node at (-.7,-2.4) {\tiny{$w(x_{5})\neq 1$}};
\node at (1,-3.9) {\tiny{$1$}};
\node at (2.55,-2.4) {\tiny{$w(x_{3})\neq 1$}};
\node at (1,1) {\normalsize{$D_{1}$}};

\node[draw, circle] (1) at (4,0){$x_{1}$};
\node[draw, circle] (2) at (6,0) {$x_{2}$};
\node[draw, circle] (5) at (4,-2) {$x_{5}$};
\node[draw, circle] (3) at (6,-2){$x_{3}$};
\node[draw, circle] (4) at (5,-3.5){$x_{4}$};

\draw[->] (1) to (5);
\draw[line width=1pt] (5) -- (4);
\draw[->] (2) to (1);
\draw[line width=1pt] (4) -- (3);
\draw[->] (3) to (2);

\node at (4,.4) {\tiny{$w(x_{1})\neq 1$}};
\node at (6,.4) {\tiny{$w(x_{2})\neq 1$}};
\node at (3.6,-2) {\tiny{$1$}};
\node at (5,-3.9) {\tiny{$1$}};
\node at (6.4,-2) {\tiny{$1$}};
\node at (5,1) {\normalsize{$D_{2}$}};

\node[draw, circle] (1) at (8,0){$x_{1}$};
\node[draw, circle] (2) at (10,0) {$x_{2}$};
\node[draw, circle] (5) at (8,-2) {$x_{5}$};
\node[draw, circle] (3) at (10,-2){$x_{3}$};
\node[draw, circle] (4) at (9,-3.5){$x_{4}$};

\draw[line width=1pt] (2) -- (1);
\draw[->] (1) to (5);
\draw[->] (5) to (4);
\draw[->] (4) to (3);
\draw[line width=1pt] (3) -- (2);

\node at (8,0.4) {\tiny{$1$}};
\node at (10,.4) {\tiny{$1$}};
\node at (7.4,-2.4) {\tiny{$w(x_{5})\neq 1$}};
\node at (9,-4) {\tiny{$w(x_{4})\neq 1$}};
\node at (10.7,-2.4) {\tiny{$w(x_{3})\neq 1$}};
\node at (9,1) {\normalsize{$D_{3}$}};

\node[draw, circle] (1) at (13,0){$x_{1}$};
\node[draw, circle] (2) at (15,0) {$x_{2}$};
\node[draw, circle] (5) at (13,-2) {$x_{5}$};
\node[draw, circle] (3) at (15,-2){$x_{3}$};
\node[draw, circle] (4) at (14,-3.5){$x_{4}$};

\draw[->] (1) to (2);
\draw[->] (2) to (3);
\draw[->] (4) to (3);
\draw[->] (4) to (5);
\draw[line width=1pt] (5) -- (1);

\node at (13,0.4) {\tiny{$1$}};
\node at (15,.4) {\tiny{$w(x_{2})\neq 1$}};
\node at (12.4,-2.4) {\tiny{$w(x_{5})\neq 1$}};
\node at (14,-4) {\tiny{$1$}};
\node at (15.5,-2.4) {\tiny{$w(x_3)\neq 1$}};
\node at (14,1) {\normalsize{$D_{4}$}};
}
\end{tikzpicture}

\label{fig:1}
\end{figure}

\end{center}

\begin{thm}
If $G\simeq C_{n}$, then $I(D)$ is unmixed if and only if one of the following conditions hold:
\begin{enumerate}[1)]
\item $n=3$ and there is $x\in V(D)$ such that $w(x)=1$.
\item $n\in\{4,5,7\}$ and the vertices of $V^{+}$ are sinks.
\item $n=5$, there is $(x,y)\in E(D)$ with $w(x)=w(y)=1$ and $D\not\simeq D_{1}, D\not\simeq D_{2}, D\not\simeq D_{3}$. 
\item $D\simeq D_{4}$.
\end{enumerate}
\end{thm}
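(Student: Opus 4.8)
The plan is to reduce everything to Theorem \ref{unmixed}: $I(D)$ is unmixed if and only if $I(G)$ is unmixed and $D$ has the minimal-strong property, i.e. $L_{3}(C)=\emptyset$ for every strong vertex cover $C$ (Remark \ref{star-L3}). Since $G\simeq C_{n}$, I would invoke the classical classification of unmixed (well-covered) cycles, namely that $I(C_{n})$ is unmixed exactly when $n\in\{3,4,5,7\}$; this immediately forces $n\in\{3,4,5,7\}$ and reduces the problem to deciding, for each such $n$, when every strong vertex cover of $D$ is minimal. As minimal covers satisfy $L_{3}=\emptyset$ (Proposition \ref{minimalcover-L3}), the only possible obstructions are strong covers strictly larger than minimal ones. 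For a cycle the minimal covers have a fixed cardinality and the only non-minimal vertex covers are $V$ and the covers $V\setminus\{x_{i}\}$ (whose complement is a single vertex); so I would search for strong covers only among these.

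The cases $n=3,4,7$ I expect to dispatch quickly. For $n\ge 6$, Lemma \ref{lem2-cycle} already gives that $D$ has the minimal-strong property if and only if the vertices of $V^{+}$ are sinks, which for $n=7$ is condition 2). For $n=4$ I would reuse the cover from the proof of Lemma \ref{lem2-cycle}: if some $z\in V^{+}$ is not a sink, say $(z,y)\in E$, then after relabelling so that $z=x_{3}$, $y=x_{2}$, the cover $C=\{x_{1},x_{3}\}\cup\{x_{2}\}$ still has $L_{3}(C)=\{y\}$ and is strong, so again minimal-strong is equivalent to $V^{+}$ being sinks, giving condition 2) for $n=4$. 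For $n=3$ the only non-minimal cover is $V$ itself, and by Remark \ref{V-strong} together with Proposition \ref{prop-unicycle} the cover $V$ is strong exactly when $D$ is a cyclically oriented triangle with $V=V^{+}$; hence the minimal-strong property fails precisely when $V=V^{+}$, which by the source convention (Remark \ref{rem3.3}) and Remark \ref{remark-mixed} is exactly the failure of condition 1), i.e. the existence of a vertex of weight $1$.

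The heart of the argument, and the step I expect to be the main obstacle, is $n=5$. Here the minimal covers have size $3$ and the non-minimal covers are $V$ (size $5$) together with the five covers $C_{i}:=V\setminus\{x_{i}\}$ (size $4$); for each $C_{i}$ one computes $L_{3}(C_{i})=\{x_{i+2},x_{i-2}\}$, the two vertices at distance two from $x_{i}$. I would characterize strongness combinatorially: $V$ is strong iff $D$ is a cyclically oriented $C_{5}$ with $V=V^{+}$ (Proposition \ref{prop-unicycle}), and $C_{i}$ is strong iff each of its two $L_{3}$-vertices has an in-neighbour in $V^{+}\setminus L_{1}(C_{i})$. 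The essential difference with $C_{4}$ is that here $L_{3}(C_{i})$ has two elements, so strongness of $C_{i}$ imposes two simultaneous in-neighbour conditions; this extra constraint is exactly what allows $C_{5}$ to carry graphs with $V^{+}$ not all sinks that are nevertheless unmixed, whereas no such exceptions occur for $n=4,7$.

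Finally I would run the finite case analysis over the orientations of $C_{5}$ and the positions of the weight-$1$ vertices (subject to the source convention), concluding that $I(D)$ is unmixed iff none of the six non-minimal covers above is strong. Unwinding this produces the stated trichotomy: either $V^{+}$ consists of sinks (condition 2), or $D$ has an edge with both endpoints of weight $1$ and is not one of the three boundary graphs $D_{1},D_{2},D_{3}$ (condition 3), or $D$ is the remaining exceptional graph $D_{4}$ (condition 4). The delicate verifications will be that $D_{1},D_{2},D_{3}$ are mixed---done by exhibiting in each a strong cover of the form $V\setminus\{x_{i}\}$ (for $D_{1}$, the cover $V\setminus\{x_{4}\}$ is strong)---and that $D_{4}$ is unmixed---done by checking that all six non-minimal covers fail to be strong; for instance $V\setminus\{x_{1}\}$ fails because its $L_{3}$-vertex $x_{4}$ is a source in $D_{4}$ and so has no in-neighbour at all.
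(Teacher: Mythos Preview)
Your approach is essentially the same as the paper's: reduce via Theorem \ref{unmixed} to the minimal-strong property, invoke the well-covered classification to get $n\in\{3,4,5,7\}$, dispatch $n=7$ by Lemma \ref{lem2-cycle}, $n=3$ via Proposition \ref{prop-unicycle}/Remark \ref{remark-mixed}, $n=4$ by the same cover construction as in Lemma \ref{lem2-cycle}, and then carry out the finite case analysis for $n=5$. The paper exhibits the same strong covers $V\setminus\{x_i\}$ to show $D_1,D_2,D_3$ are mixed and runs the same kind of check that no size-$4$ cover of $D_4$ is strong.

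One small imprecision: your blanket claim that for a cycle ``the only non-minimal vertex covers are $V$ and $V\setminus\{x_i\}$'' is false for $n=7$ (there are non-minimal covers of sizes $5$ and $6$), but this does no harm because you handle $n=7$ separately via Lemma \ref{lem2-cycle}; the claim is only used, and is correct, for $n\le 5$. Your explicit enumeration of the six candidate non-minimal covers for $n=5$ is in fact a bit tidier than the paper's presentation of the same case analysis.
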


\proof
$\Rightarrow)$ By Theorem \ref{unmixed}, $D$ has the minimal- strong property and $G$ is unmixed. Then, by \cite[Exercise 2.4.22]{gitler-Vill}, $n\in\{3,4,5,7\}$. If $n=3$, then by Remark \ref{remark-mixed}, $D$ satisfies $1)$. If $n=7$, then by Lemma \ref{lem2-cycle}, $D$ satisfies $2)$. Now suppose $n=4$ and $D$ does not satisfies $2)$, then we can assume $x_{1}\in V^{+}$ and $(x_{1},x_{2})\in E(D)$. Consequently, $(x_{4},x_{1})\in E(G)$, since $w(x_{1})\neq 1$. Furthermore, $\mathcal{C}=\{x_1,x_2,x_3\}$ is a vertex cover with $L_{3}(\mathcal{C})=\{x_2\}$. Thus, $x_{1}\in L_{2}(\mathcal{C})$ and $(x_1,x_2)\in E(D)$ so $\mathcal{C}$ is strong. A contradiction, since $\mathcal{C}$ is not minimal. This implies $D$ satisfies $2)$. Finally suppose $n=5$. If $D\simeq D_{1}$, then $\mathcal{C}_{1}=\{x_1,x_2,x_3,x_5\}$ is a vertex cover with $L_{3}(\mathcal{C}_{1})=\{x_1,x_2\}$. Also $(x_{5},x_1), (x_{3},x_{2})\in E(D)$ with $x_{5},x_{3}\in V^{+}$. Consequently, $\mathcal{C}_{1}$ is strong, since $x_{5},x_{3}\in L_{2}(\mathcal{C}_{1})$. A contradiction, since $\mathcal{C}_{1}$ is not minimal. If $D\simeq D_{2}$, then $\mathcal{C}_{2}=\{x_1,x_2,x_4,x_5\}$ is a vertex cover where $L_{3}(\mathcal{C}_{2})=\{x_1,x_5\}$ and $(x_{2},x_1), (x_{1},x_{5})\in E(D)$ with $x_{2}, x_{1}\in V^{+}$. Hence, $\mathcal{C}_{2}$ is strong, since $x_{2},x_{1}\notin L_{1}(\mathcal{C}_{2})$. A contradiction, since $\mathcal{C}_{2}$ is not minimal. If $D\simeq D_{3}$, $\mathcal{C}_{3}=\{x_2,x_3,x_4,x_5\}$ is a vertex cover where $L_{3}(\mathcal{C}_{3})=\{x_3,x_4\}$ and $(x_{4},x_3), (x_{5},x_{4})\in E(D)$ with $x_{4},x_{5}\in V^{+}$. Thus, $\mathcal{C}_{3}$ is strong, since $x_{4},x_{5}\notin L_{1}(\mathcal{C}_{3})$. A contradiction, since $\mathcal{C}_{3}$ is not minimal. Now, since $n=5$ and by $3)$ we can assume $(x_2,x_3)\in E(D)$, $x_{2},x_{3}\in V^{+}$ and there are not two adjacent vertices with weight $1$. Since $x_2\in V^{+}$, $(x_1,x_2)\in E(D)$. Suppose there are not $3$ vertices $z_1,z_2,z_3$ in $V^{+}$ such that $(z_1,z_2,z_3)$ is a path in $G$, then $w(x_4)=w(x_1)=1$. Furthermore, $w(x_5)\neq 1$, since there are not adjacent vertices with weight $1$. So, $\mathcal{C}_4=\{x_2,x_3,x_4,x_5\}$ is a vertex cover of $D$, where $L_{3}(\mathcal{C}_4)=\{x_3,x_4\}$. Also $(x_2,x_3)\in E(G)$ with $w(x_{2})\neq 1$. Hence, if $(x_{3},x_4)\in E(D)$ or $(x_{5},x_{4})\in E(D)$, then $\mathcal{C}_4$ is strong, since $x_{3},x_{5}\in V^{+}$. But $\mathcal{C}_{4}$ is not minimal. Consequently, $(x_{4},x_3),(x_4,x_5)\in E(D)$ and $D\simeq D_{4}$. Now, we can assume there is a path $(z_{1},z_{2},z_{3})$ in $D$ such that $z_{1},z_{2},z_{3}\in V^{+}$. Since there are not adjacent vertices with weight $1$, we can suppose there is $z_{4}\in V^{+}$ such that $\mathcal{L}=(z_1,z_2,z_3,z_4)$ is a path. We take $\{z_5\}=V(D)\setminus V(\mathcal(L))$ and we can assume $(z_2,z_3)\in E(D)$. This implies, $(z_1,z_2)$, $(z_5,z_1)\in E(D)$, since $z_1,z_2\in V^{+}$. Thus, $\mathcal{C}_{5}=\{z_1,z_2,z_3,z_4\}$ is a vertex cover with $L_{3}(\mathcal{C}_{5})=\{z_2,z_3\}$. Then $\mathcal{C}_{5}$ is strong, since $(z_1,z_2), (z_2,z_3)\in E(D)$ with $z_2\in L_{3}(\mathcal{C}_{5})$ and $z_1\in L_{2}(\mathcal{C}_{5})$. A contradiction, since $\mathcal{C}_{5}$ is not minimal.  

\vspace{1ex}
\noindent
$\Leftarrow)$ If $n\in\{3,4,5,7\}$, then by \cite[Exercise 2.4.22]{gitler-Vill} $G$ is unmixed. By Theorem \ref{unmixed}, we will only prove that $D$ has the minimal-strong property. If $D$ satisfies $2)$, then by Lemma \ref{lem1-cycle}, $D$ has the minimal-strong property. If $D$ satisfies $1)$ and $\mathcal{C}$ is a strong vertex cover, then by Proposition \ref{prop-unicycle}, $|\mathcal{C}|\leq 2$. This implies $\mathcal{C}$ is minimal. Now, suppose $n=5$ and $\mathcal{C}^\prime$ is a strong vertex cover of $D$ with $|\mathcal{C}^\prime|\geq 4$. If $D\simeq D_4$, then $x_2,x_5\notin L_{3}(\mathcal{C}^\prime)$, since $(N_{D}^{-}(x_{2})\cup N_{D}^{-}(x_{5}))\cap V^{+}=\emptyset$. So $N_{D}(x_{2})\not\subseteq \mathcal{C}^{\prime}$ and $N_{D}(x_{5})\not\subseteq \mathcal{C}^{\prime}$. Consequently, $x_{1}\notin \mathcal{C}^{\prime}$ implies $\mathcal{C}^{\prime}=\{x_2,x_3,x_4,x_5\}$. But $x_4\in L_{3}(\mathcal{C}^{\prime})$ and $N_{D}^{-}(x_{4})=\emptyset$. A contradiction, since $\mathcal{C}^{\prime}$ is strong. Now assume $D$ satisfies $3)$. Suppose there is a path $\mathcal{L}=(x_1,x_2,x_3)$ in $G$ such that $w(x_1)=w(x_2)=w(x_3)=1$. We can suppose $(x_4,x_5)\in E(D)$ where $V(D)\setminus V(\mathcal{L})=\{x_4,x_5\}$. Since $w(x_1)=w(x_3)=1$, $x_2\notin L_{3}(\mathcal{C}^\prime)$. If $x_2\notin \mathcal{C}^\prime$, then $\mathcal{C}^\prime=\{x_1,x_3,x_4,x_5\}$ and  $x_4\in L_{3}(\mathcal{C}^\prime)$. But $N_{D}^{-}(x_4)=\{x_3\}$ and $w(x_3)=1$. A contradiction, hence $x_2\in \mathcal{C}^\prime$. We can assume $x_3\notin\mathcal{C}^\prime$, since $x_2\notin L_{3}(\mathcal{C}^\prime)$. This implies $\mathcal{C}^\prime=\{x_1,x_2,x_4,x_5\}$ and $L_{3}(\mathcal{C}^\prime)=\{x_1,x_5\}$. Thus, $(x_5,x_1)\in E(D)$, $x_{5},x_{4}\in V^{+}$. Consequently $(x_3,x_4)\in E(D)$, since $x_4\in V^{+}$. A contradiction, since $D\not\simeq D_{2}$. Hence, there are not three consecutive vertices whose weights are $1$. Consequently, since $D$ satisfies $3)$, we can assume $w(x_1)=w(x_2)=1$, $w(x_3)\neq 1$ and $w(x_5)\neq 1$. If $w(x_4)=1$, then $x_3,x_5\notin L_{3}(\mathcal{C}^\prime)$ since $N_{D}(x_3,x_5)\cap V^{+}=\emptyset$. This implies $N_{D}(x_{3})\not\subseteq \mathcal{C}^{\prime}$ and $N_{D}(x_5)\not\subseteq \mathcal{C}^{\prime}$. Then, $x_4\notin \mathcal{C}^\prime$ and $\mathcal{C}^\prime=\{x_1,x_2,x_3,x_5\}$. Thus, $(x_5,x_1), (x_3,x_2)\in E(D)$, since $L_{3}(\mathcal{C}^\prime)=\{x_1,x_2\}$. Consequently, $(x_4,x_5), (x_4,x_3)\in E(D)$, since $x_5,x_3\in V^{+}$. A contradiction, since $D\not \simeq D_1$. So, $w(x_4)\neq 1$ and we can assume $(x_5,x_4)\in E(D)$, since $x_4\in V^{+}$. Furthermore $(x_1,x_5)\in E(D)$, since $x_5\in V^{+}$. Hence, $(x_3,x_4)\in E(D)$, since $D\not\simeq D_{3}$. Then $(x_2,x_3)\in E(D)$, since $x_3\in V^{+}$. This implies $x_1,x_2,x_3,x_5\notin L_{3}(\mathcal{C}^{\prime})$, since $N^{-}_{D}(x_{i})\cap V^{+}=\emptyset$ for $i\in \{1,2,3,5\}$. A contradiction, since $|\mathcal{C}^{\prime}|\geq 4$. Therefore $D$ has the minimal-strong property. 
\qed

\section{Cohen-Macaulay weighted oriented graphs}
In this section we study the Cohen-Macaulayness of $I(D)$. In particular we give a combinatorial characterization of this property when $D$ is a path or $D$ is complete. Furthermore, we show the Cohen-Macaulay property depends of the characteristic of $K$.

\begin{defn}
The weighted oriented graph $D$ is {\it Cohen- Macaulay\/} over the field $K$ if the ring $R/I(D)$ is Cohen-Macaulay. 

\end{defn}

\begin{rem}\label{radical}
If $G$ is the underlying graph of $D$, then $rad(I(D))=I(G)$.
\end{rem}

\begin{prop}
If $I(D)$ is Cohen-Macaulay, then $I(G)$ is Cohen-Macaulay and $D$ has the minimal-strong property.
\end{prop}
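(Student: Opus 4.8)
The plan is to derive both conclusions from the standard fact that a Cohen-Macaulay quotient ring is unmixed, combined with the behaviour of Cohen-Macaulayness under passage to the radical.

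First I would observe that since $R/I(D)$ is Cohen-Macaulay it is unmixed, so all associated primes of $I(D)$ have the same height; that is, $I(D)$ is unmixed in the sense of the definition above. By Theorem \ref{unmixed} (equivalence of conditions $1)$ and $3)$) this forces both that $I(G)$ is unmixed and that $L_{3}(C)=\emptyset$ for every strong vertex cover $C$ of $D$. By Remark \ref{star-L3}, the condition $L_{3}(C)=\emptyset$ for all strong $C$ is exactly the statement that $D$ has the minimal-strong property, which is one of the two assertions to be proved.

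It remains to upgrade the unmixedness of $I(G)$ to Cohen-Macaulayness. The key input here is Remark \ref{radical}, which identifies the underlying-graph ideal with the radical: $rad(I(D))=I(G)$. I would then invoke the theorem of Herzog, Takayama and Terai asserting that for any monomial ideal $I$ the Cohen-Macaulayness of $R/I$ implies the Cohen-Macaulayness of $R/\sqrt{I}$. Applying this with $I=I(D)$ and using $\sqrt{I(D)}=I(G)$ yields that $R/I(G)$ is Cohen-Macaulay, i.e. $I(G)$ is Cohen-Macaulay, completing the proof.

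The main obstacle is precisely this last step: unmixedness of $I(G)$ by itself does not force it to be Cohen-Macaulay, so one genuinely needs that the passage from the monomial ideal $I(D)$ to its radical preserves Cohen-Macaulayness. The cleanest route is the cited radical theorem; a more self-contained alternative would be to compare the two quotients directly via polarization or local cohomology, but that is considerably more technical and I would avoid it in favour of the direct citation.
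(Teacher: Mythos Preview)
Your proposal is correct and follows essentially the same route as the paper: both use that Cohen--Macaulay implies unmixed together with Theorem~\ref{unmixed} to obtain the minimal-strong property, and both invoke Remark~\ref{radical} plus the Herzog--Takayama--Terai result \cite[Theorem 2.6]{Radical-Herzog} to pass Cohen--Macaulayness to the radical $I(G)$. Your intermediate observation that $I(G)$ is unmixed is correct but ultimately unused, since (as you note yourself) unmixedness alone does not suffice and the radical theorem does all the work for that half.
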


\begin{proof}
 By Remark \ref{radical}, $I(G)=rad(I(D))$, then by \cite[Theorem 2.6]{Radical-Herzog}, $I(G)$ is Cohen-Macaulay.  Furthermore $I(D)$ is unmixed, since $I(D)$ is Cohen-Macaulay. Hence, by Theorem \ref{unmixed}, $D$ has the minimal-strong property.
\end{proof}

\begin{exmp}
In Example \ref{example1} and Example \ref{example2} $I(D)$ is mixed. Hence, $I(D)$ is not Cohen-Macaulay, but $I(G)$ is Cohen-Macaulay.
\end{exmp}

\begin{conj}
$I(D)$ is Cohen- Macaulay if and only if $I(G)$ is Cohen-Macaulay and $D$ has the minimal-strong property. Equivalently $I(D)$ is Cohen-Macaulay if and only if $I(D)$ is unmixed and $I(G)$ is Cohen-Macaulay.
\end{conj}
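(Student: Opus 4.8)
The plan is to notice first that the two displayed equivalences collapse to a single nontrivial implication. A Cohen-Macaulay ring is unmixed, and the preceding proposition already gives that $I(D)$ Cohen-Macaulay forces $I(G)$ Cohen-Macaulay and $D$ to have the minimal-strong property; this settles the forward direction ``$\Rightarrow$'' in both formulations. The two right-hand sides also agree: since $I(G)$ Cohen-Macaulay implies $G$ is unmixed, Theorem \ref{unmixed} shows that, in the presence of $I(G)$ Cohen-Macaulay, the conditions ``$D$ has the minimal-strong property'' and ``$I(D)$ is unmixed'' are interchangeable. Everything therefore reduces to the reverse implication: \emph{if $I(G)$ is Cohen-Macaulay and $D$ has the minimal-strong property, then $R/I(D)$ is Cohen-Macaulay.}

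I would attack this through polarization. Let $I(D)^{\mathrm{pol}}$ denote the polarization of $I(D)$ in a larger polynomial ring; it is standard that $R/I(D)$ is Cohen-Macaulay if and only if its polarization is, and that polarization preserves the unmixedness just obtained. The squarefree ideal $I(D)^{\mathrm{pol}}$ is the Stanley-Reisner ideal of an explicit simplicial complex $\Delta$ assembled from $G$ and the weights: each vertex $x_j$ of weight $w(x_j)>1$ occurring as a head is split into a chain of new variables, so that $\Delta$ is, informally, the independence complex of $G$ decorated with short ``whiskers'' encoding the exponents. Since $\mathrm{rad}(I(D)) = I(G)$ by Remark \ref{radical}, the undecorated part of $\Delta$ is exactly the complex governing $R/I(G)$. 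The target becomes: show $\Delta$ is Cohen-Macaulay over $K$ whenever that undecorated complex is and the minimal-strong property holds.

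To execute this I would induct on $|V(D)|$ using the c-minor operation $D \mapsto D\setminus N_G[x]$. Theorem \ref{c-minor-umixed} already shows unmixedness descends to c-minors, and one hopes Cohen-Macaulayness does too via a shedding argument: choose a good vertex $x$ (for instance a leaf of $G$, or a vertex forced to lie outside every maximal strong cover), verify that both the deletion and the link of the corresponding face of $\Delta$ again satisfy the hypotheses, and close the induction through vertex decomposability or shellability. The minimal-strong property is precisely the hypothesis that should keep the link and deletion in the correct combinatorial shape, ensuring the extra polarization variables cannot generate stray facets. An alternative to the decomposability step is to invoke Eagon--Reiner on the polarized complex, reducing Cohen-Macaulayness of $\Delta$ to showing that the Alexander dual of $\Delta$ has a linear resolution (equivalently, linear quotients), with the facet structure there controlled by the strong covers via Theorem \ref{decomposition}.

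The main obstacle I anticipate is exactly the interaction between the weight-encoding variables and any such decomposition. For an ordinary edge ideal the Cohen-Macaulay property is dictated purely by the independence complex of $G$, but here the exponents $w(x_j)$ contribute additional faces to $\Delta$, and there is no formal reason the decorated complex remains shellable or vertex decomposable merely because the underlying one is. Reconciling the purely graph-theoretic Cohen-Macaulayness of $I(G)$ with the arithmetic of the weights is the crux; the honest expectation is that this resists a uniform argument, which is why only the special cases ($D$ a path, $D$ complete, together with a characteristic-dependent example) are settled, and the full equivalence is left as the stated conjecture.
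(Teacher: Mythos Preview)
The statement you are attempting to prove is a \emph{conjecture} in the paper; the paper offers no proof of it. So there is nothing to compare your argument against on the reverse implication. What the paper does establish is exactly what you isolate in your first paragraph: the forward direction (via the preceding proposition and the fact that Cohen--Macaulay implies unmixed), and the equivalence of the two right-hand sides (via Theorem~\ref{unmixed}). That reduction is correct and matches the paper.

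Your sketch for the remaining implication---polarize, interpret $I(D)^{\mathrm{pol}}$ as a Stanley--Reisner ideal, and attempt a shedding/vertex-decomposability induction along c-minors, or alternatively an Eagon--Reiner argument on the Alexander dual---is a reasonable line of attack, but as you yourself acknowledge in the final paragraph, it does not close. The specific obstruction you name is real: there is no mechanism in your outline that forces the decorated complex $\Delta$ to inherit shellability or vertex decomposability from the independence complex of $G$, and the minimal-strong property, while it controls the facets of $\Delta$ (through Theorem~\ref{decomposition}), is not shown to control the links. The induction step ``verify that both the deletion and the link again satisfy the hypotheses'' is asserted, not proved, and in fact Cohen--Macaulayness of $I(G)$ is not in general known to pass to $G\setminus N_G[x]$ for arbitrary $x$. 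So the proposal is an honest research plan, not a proof; the paper agrees, recording the statement as open and verifying it only for paths and complete graphs.
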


\begin{prop}\label{prop-path}
Let $D$ be a weighted oriented graph such that $V=\{x_{1},\ldots, x_{k}\}$ and whose underlying graph is a path $G=(x_{1},\ldots, x_{k})$. Then the following conditions are equivalent. 
\begin{enumerate}[1)]
\item $R/I(D)$ is Cohen-Macaulay.
\item $I(D)$ is unmixed.
\item $k=2$ or $k=4$. In the second case, if $(x_{2},x_{1})\in E(D)$ or $(x_{3},x_{4})\in E(D)$, then $w(x_{2})=1$  or $w(x_3)=1$ respectively.
\end{enumerate} 
\end{prop}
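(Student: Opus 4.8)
The plan is to establish the cycle of implications $1)\Rightarrow 2)\Rightarrow 3)\Rightarrow 1)$. The implication $1)\Rightarrow 2)$ is immediate: a Cohen-Macaulay ring is unmixed, so if $R/I(D)$ is Cohen-Macaulay then $I(D)$ is unmixed. The real content lies in the combinatorial equivalence $2)\Leftrightarrow 3)$ and in the Cohen-Macaulay step $3)\Rightarrow 1)$. Throughout I use Theorem \ref{unmixed}: $I(D)$ is unmixed if and only if $G$ is unmixed and $D$ has the minimal-strong property (equivalently, $L_3(C)=\emptyset$ for every strong cover $C$, by Remark \ref{star-L3}).

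For $2)\Rightarrow 3)$, note first that since $G$ is the path on $k$ vertices, a direct comparison of minimal vertex covers shows $G$ is unmixed only when $k\in\{2,4\}$ (for $k=3$ the covers $\{x_2\}$ and $\{x_1,x_3\}$ have different sizes, and for $k\ge 5$ one produces minimal covers of distinct cardinalities in the same way; this is the classification of unmixed/well-covered paths, cf.\ \cite{gitler-Vill}). Assume $k=4$ and suppose the weight condition in $3)$ fails, say $(x_2,x_1)\in E(D)$ with $w(x_2)\ne 1$. By Remark \ref{rem3.3} the vertex $x_2$ is not a source, so it has an in-edge; since the edge to $x_1$ leaves $x_2$, this forces $(x_3,x_2)\in E(D)$. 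Then $C=\{x_1,x_2,x_4\}$ is a vertex cover with $L_3(C)=\{x_1\}$, while $N_D^{+}(x_2)=\{x_1\}\subseteq C$ and $x_3\in N_D^{-}(x_2)\cap C^{c}$ give $x_2\in L_2(C)$. As $(x_2,x_1)\in E(D)$ and $w(x_2)\ne 1$, the cover $C$ is strong but not minimal (since $\{x_2,x_4\}$ already covers $D$), contradicting the minimal-strong property. The case $(x_3,x_4)\in E(D)$ with $w(x_3)\ne 1$ is symmetric, using $C=\{x_1,x_3,x_4\}$; hence $3)$ holds.

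For $3)\Rightarrow 1)$, I first run the previous $L_3$-analysis in reverse to recover unmixedness. Let $C$ be any strong cover of $D$ (with $k=4$). An endpoint $x_1\in L_3(C)$ would need a witness $x_2$ with $(x_2,x_1)\in E(D)$ and $w(x_2)\ne 1$, which $3)$ forbids; symmetrically $x_4\notin L_3(C)$. An interior vertex $x_2\in L_3(C)$ would force its unique neighbour situation $x_1\in C$ with $N_D(x_1)=\{x_2\}\subseteq C$, hence $x_1\in L_3(C)$, already excluded; likewise $x_3\notin L_3(C)$. Thus $L_3(C)=\emptyset$, so by Proposition \ref{minimalcover-L3} every strong cover is minimal and Theorem \ref{unmixed} gives that $I(D)$ is unmixed. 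If $k=2$ then $I(D)$ is principal, so $R/I(D)$ is a hypersurface and is Cohen-Macaulay. If $k=4$, unmixedness makes $R/I(D)$ equidimensional of dimension $2$ without embedded primes, so $\operatorname{depth} R/I(D)\ge 1$; it remains to raise the depth to $2$. For this I pass to the polarization $I(D)^{\mathrm{pol}}$: polarization preserves depth and projective dimension, so $R/I(D)$ is Cohen-Macaulay exactly when the Stanley-Reisner ring of the squarefree ideal $I(D)^{\mathrm{pol}}$ is, and the latter I verify by exhibiting a shelling of the associated simplicial complex (equivalently, a linear-quotient ordering of the generators of $I(D)^{\mathrm{pol}}$). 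The finitely many orientation and weight configurations permitted by $3)$ all yield pure, shellable complexes; alternatively one may produce an explicit homogeneous system of parameters of length $2$ and check directly that it is a regular sequence on $R/I(D)$.

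The genuinely substantive step is $3)\Rightarrow 1)$ in the case $k=4$. The equivalences $1)\Leftrightarrow 2)\Leftrightarrow 3)$ reduce, through Theorem \ref{unmixed}, to a short analysis of $L_3$ in which the endpoints do all the work; but upgrading unmixedness to Cohen-Macaulayness needs homological input that the strong covers cannot see. I therefore expect the main burden to be the bookkeeping in the polarization and shelling (or in validating a regular sequence), carried out over the several admissible orientations and weights allowed by $3)$.
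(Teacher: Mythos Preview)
Your argument for $2)\Rightarrow 3)$ is correct and in fact more self-contained than the paper's: rather than invoking the bipartite unmixedness theorem (Theorem~\ref{unmixed-bipartite}) to force $k\in\{2,4\}$ and the weight condition, you work directly from Theorem~\ref{unmixed} and build the explicit non-minimal strong cover $\{x_1,x_2,x_4\}$. Likewise your $L_3$-analysis showing that $3)$ implies the minimal-strong property is clean and does what the paper gets implicitly from Theorem~\ref{unmixed-bipartite}.

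The gap is in $3)\Rightarrow 1)$ for $k=4$. Your proposed route via polarization and shelling is not just bookkeeping: the weights $w(x_i)$ are arbitrary positive integers, so the polarized ideals form an \emph{infinite} family of squarefree ideals in rings of unbounded size, and your sentence about ``finitely many orientation and weight configurations'' is not accurate. A uniform shelling argument over all these complexes may well exist, but you have not given one, and it is not obvious. The paper sidesteps this entirely by taking the alternative you mention only in passing: it writes down, depending on the orientation, an explicit length-two regular sequence on $R/I(D)$ chosen from
\[
\{x_2-x_1^{w(x_1)},\,x_3-x_4^{w(x_4)}\},\quad \{x_2-x_1^{w(x_1)},\,x_4-x_3^{w(x_3)}\},\quad \{x_1-x_2^{w(x_2)},\,x_4-x_3^{w(x_3)}\},
\]
which immediately gives $\operatorname{depth}(R/I(D))\ge 2=\dim(R/I(D))$. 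This handles all weights at once with no case explosion. If you want to complete your proof, abandon polarization here and instead verify directly that one of these pairs is regular in each of the orientation cases allowed by $3)$.
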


\proof
{\it 1)} $\Rightarrow$ {\it 2)} By \cite[Corollary 1.5.14]{gitler-Vill}.

\vspace{1ex}
\noindent
{\it 2)} $\Rightarrow$ {\it 3)} By Theorem \ref{unmixed-bipartite}, $G$ has a perfect matching, since $D$ is bipartite. Consequently $k$ is even and $\{x_1,x_2\}, \{x_3,x_4\}, \ldots, \{x_{k-1},x_k\}$ is a perfect matching. If $k\geq 6$, then by Theorem \ref{unmixed-bipartite}, we have $\{x_2,x_5\}\in E(G)$, since $\{x_2,x_3\}$ and $\{x_4,x_5\}\in E(G)$. A contradiction since $\{x_2,x_5\}\notin E(G)$. Therefore $k\in\{2,4\}$. Furthermore by Theorem \ref{unmixed-bipartite}, $w(x_{2})=1$ or $w(x_{3})=1$ when $(x_{2},x_{1})\in E(D)$ or $(x_{3},x_{4})\in E(D)$, respectively.

\vspace{1ex}
\noindent
{\it 3)} $\Rightarrow$ {\it 1)} We take $I=I(D)$. If $k=2$, then we can assume $(x_1,x_2)\in E(D)$. So, $I=(x_{1}x_{2}^{w(x_2)})=(x_{1})\cap (x_{2}^{w(x_{2})})$. Thus, by Remark \ref{ass-primes}, ${\rm Ass}(I)=\{(x_1),(x_2)\}$. This implies, ${\rm ht}(I)=1$ and $\dim(R/I)=k-1=1$. Also, ${\rm depth}(R/I)\geq 1$, since $(x_1,x_2)\notin {\rm Ass}(I)$. Hence, $R/I$ is Cohen-Macaulay. Now, if $k=4$, then ${\rm ht}(I)={\rm ht}({\rm rad}(I))={\rm ht}(I(G))=2$. Consequently, $\dim(R/I)=k-2=2$. Furthermore one of the following sets $\{x_2-x_1^{w(x_1)},x_3-x_4^{w(x_4)}\}$, $\{x_2-x_1^{w(x_1)},x_4-x_3^{w(x_3)}\}$, $\{x_1-x_2^{w(x_2)},x_4-x_3^{w(x_3)}\}$ is a regular sequence of $R/I$, then ${\rm depth}(R/I)\geq 2$. Therefore, $I$ is Cohen-Macaulay.
\qed

\begin{thm}
If $G$ is a complete graph, then the following conditions are equivalent. 
\begin{enumerate}[1)]
\item $I(D)$ is unmixed.
\item $I(D)$ is Cohen-Macaulay.
\item There are not $D_1,\ldots, D_{s}$ unicycles orientes subgraphs of $D$ such that $V(D_{1}),\ldots, V(D_s)$ is a partition of $V(D)$
\end{enumerate}

\end{thm}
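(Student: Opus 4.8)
The plan is to establish the cycle $2)\Rightarrow 1)$, $1)\Leftrightarrow 3)$, and $1)\Rightarrow 2)$, leaning throughout on the fact that a complete graph admits only a handful of vertex covers. The first thing I would record is the governing combinatorial observation: since every pair of vertices of $K_n$ spans an edge, a subset $C\subseteq V$ is a vertex cover of $D$ if and only if $|V\setminus C|\le 1$. Hence the only vertex covers are $V$ itself and the $n$ sets $V\setminus\{x_i\}$, and the latter are exactly the minimal vertex covers, each of cardinality $n-1$. By Corollary~\ref{C-minimal-strong} each $V\setminus\{x_i\}$ is automatically a strong vertex cover, so $V$ is the only cover whose strength is genuinely in question. (I would assume $n\ge 2$, the case $n=1$ giving $I(D)=(0)$ trivially.)

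For $2)\Rightarrow 1)$ I would simply invoke that a Cohen--Macaulay ideal is unmixed, exactly as used in Proposition~\ref{prop-path} via \cite[Corollary 1.5.14]{gitler-Vill}. For $1)\Leftrightarrow 3)$ the engine is Proposition~\ref{prop-unicycle}, which translates condition $3)$ into the statement that $V$ is \emph{not} a strong vertex cover of $D$. If $V$ is strong, then $I(D)$ is mixed by Proposition~\ref{prop-mixed}. Conversely, if $V$ is not strong, then by the observation above every strong vertex cover is one of the minimal covers $V\setminus\{x_i\}$, so all strong vertex covers share the cardinality $n-1$ and $I(D)$ is unmixed by Theorem~\ref{unmixed}. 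This chain yields $I(D)$ unmixed $\iff$ $V$ not strong $\iff$ condition $3)$.

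The substantive step is $1)\Rightarrow 2)$. Assuming $I(D)$ unmixed, I would first compute the dimension: by Remark~\ref{radical}, $\mathrm{rad}(I(D))=I(G)$, whose minimal primes are the minimal vertex covers of height $n-1$, so $\dim R/I(D)=1$. Next, by Remark~\ref{ass-primes} the associated primes of $I(D)$ are the ideals $(C)$ with $C$ a strong vertex cover; unmixedness together with the combinatorial observation forces every such $C$ to have cardinality $n-1$, hence no associated prime equals the maximal ideal $\mathfrak{m}=(x_1,\ldots,x_n)$. Therefore $\mathrm{depth}(R/I(D))\ge 1=\dim R/I(D)$, and $R/I(D)$ is Cohen--Macaulay. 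Concretely one can exhibit $x_1+\cdots+x_n$ as a nonzerodivisor, since modulo $(V\setminus\{x_i\})$ it reduces to $x_i\ne 0$.

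The main obstacle is keeping this dimension-one depth argument honest: one must verify that unmixedness genuinely excludes $\mathfrak{m}$ from $\mathrm{Ass}(R/I(D))$, and it is precisely here that the hypothesis that $G$ is complete is indispensable, since it is what pins $\dim R/I(D)$ at $1$ and lets the absence of an embedded prime at $\mathfrak{m}$ upgrade directly to Cohen--Macaulayness.
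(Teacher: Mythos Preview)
Your proof is correct and follows essentially the same approach as the paper. The paper organizes the implications as the cycle $1)\Rightarrow 3)\Rightarrow 2)\Rightarrow 1)$ rather than your $2)\Rightarrow 1)$, $1)\Leftrightarrow 3)$, $1)\Rightarrow 2)$, but the content is identical: Proposition~\ref{prop-unicycle} converts condition $3)$ into ``$V$ is not a strong vertex cover'', Remark~\ref{ass-primes} translates this into $\mathfrak{m}\notin\mathrm{Ass}(I(D))$, and the one-dimensionality of $R/I(D)$ (from $\mathrm{ht}(I(G))=n-1$) then gives Cohen--Macaulayness exactly as you argue.
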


\proof
 We take $I=I(D)$. Since $I(G)=rad(I)$ and $G$ is complete, ${\rm ht}(I)={\rm ht}(I(G))=n-1$. 

\vspace{1ex}
\noindent
{\it 1)} $\Rightarrow$ {\it 3)} Since ${\rm ht}(I)=n-1$ and $I$ is unmixed, $(x_1,\ldots,x_n)\notin {\rm Ass}(I)$. Thus, by Remark \ref{ass-primes}, $V(D)$ is not a strong vertex cover of $D$. Therefore, by Proposition \ref{prop-unicycle}, $D$ satisfies $3)$. 

\vspace{1ex}
\noindent
{\it 3)} $\Rightarrow$ {\it 2)} By Proposition \ref{prop-unicycle}, $V(D)$ is not a strong vertex cover of $D$. Consequently, by Remark \ref{ass-primes}, $(x_1,\ldots, x_n)\notin {\rm Ass}(I)$. This implies, ${\rm depth}(R/I)\geq 1$. Furthermore, $\dim(R/I)=1$, since ${\rm ht}(I)=n-1$. Therefore $I$ is Cohen-Macaulay. 

\vspace{1ex}
\noindent
{\it 2)} $\Rightarrow$ {\it 1)} By \cite[Corollary 1.5.14]{gitler-Vill}.

\qed

\vspace{1ex}

If $D$ is complete or $D$ is a path, then the Cohen-Macaulay property does not depend of the field $K$. It is not true in general, see the following example.

\begin{exmp}
Let $D$ be the following weighted oriented graph:
\begin{center}
\begin{tikzpicture}[line width=1.1pt,scale=0.75]
		\tikzstyle{every node}=[inner sep=0pt, minimum width=6.5pt]

\node[draw, circle] (3) at (0,0){$x_{3}$};
\node[draw, circle] (2) at (0,3) {$x_{2}$};
\node[draw, circle] (1) at (0,5) {$x_{1}$};
\node[draw, circle] (9) at (5,0){$x_{9}$};
\node[draw, circle] (8) at (5.3,1.5){$x_{8}$};
\node[draw, circle] (7) at (5,6.5) {$x_{7}$};
\node[draw, circle] (6) at (3,1) {$x_{6}$};
\node[draw, circle] (5) at (2.3,5.3) {$x_{5}$};
 \node[draw, circle] (4) at (2.5,7) {$x_{4}$};
\node[draw, circle] (11) at (7,3) {$x_{11}$};
\node[draw, circle] (10) at (8,6){$x_{10}$};

\draw[->] (4) to (1);
\draw[->] (8) to (1);
\draw[->] (5) to (1);
\draw[->] (9) to (1);

\draw[->] (10) to (2);
\draw[->] (5) to (2);
\draw[->] (11) to (2);
\draw[->] (8) to (2);
\draw[->] (6) to (2);

\draw[->] (7) to (3);
\draw[->] (10) to (3);
\draw[->] (6) to (3);
\draw[->] (9) to (3);

\draw[->] (4) to (8);
\draw[->] (7) to (4);
\draw[->] (4) to (11);

\draw[->] (10) to (5);
\draw[->] (11) to (5);
\draw[->] (9) to (5);

\draw[->] (9) to (6);
\draw[->] (6) to (8);
\draw[->] (6) to (11);

\draw[->] (10) to (7);
\draw[->] (11) to (7);

\draw[->] (11) to (9);

\node at (-.45,0) {$2$};
\node at (-.45,3) {$2$};
\node at (-.45,5) {$2$};
\node at (3,.5) {$1$};
\node at (2.5,7.5) {$1$};
\node at (2.3,5.8) {$1$};
\node at (5.4,-.25) {$1$};
\node at (5.7,1.5) {$1$};
\node at (5,7) {$1$};
\node at (8.5,6) {$1$};
\node at (7.5,3) {$1$};

\end{tikzpicture}
\end{center}

\noindent
Hence, 
\begin{center}
$I(D)=(x_{1}^{2}x_{4}, x_{1}^{2}x_{8}, x_{1}^{2}x_{5}, x_{1}^{2}x_{9}, x_{2}^{2}x_{10},x_{2}^{2}x_{5}, x_{2}^{2}x_{11},x_{2}^{2}x_{8}, x_{2}^{2}x_{6}, x_{3}^{2}x_{7}, x_{3}^{2}x_{10}, x_{3}^{2}x_{6},$

$x_{3}^{2}x_{9}, x_{4}x_{8}, x_{4}x_{7}, x_{4}x_{11}, x_{5}x_{10}, x_{5}x_{9}, x_{5}x_{11}, x_{6}x_{8}, x_{6}x_{9}, x_{6}x_{11}, x_{7}x_{10}, x_{7}x_{11}, x_{9}x_{11})$.
\end{center}
\noindent
By \cite[Example 2.3]{ex-CM-char}, $I(G)$ is Cohen- Macaulay when the characteristic of the field $K$ is zero but it is not Cohen-Macaulay in characteristic $2$. Consequently, $I(D)$ is not Cohen-Macaulay when the characteristic of $K$ is $2$. Also, $I(G)$ is unmixed. Furthermore, by Lemma \ref{lem1-cycle}, $I(D)$ has the minimal-strong property, then $I(D)$ is unmixed. Using {\it Macaulay$2$\/} \cite{mac2} we show that $I(D)$ is Cohen-Macaulay when the characteristic of $K$ is zero.
\end{exmp}

\bibliographystyle{amsplain}

\end{document}